\documentclass[12pt]{article}
\usepackage{latexsym}
\usepackage[centertags]{amsmath}
\usepackage{amsfonts}
\usepackage{amssymb}

\setlength{\textheight}{9in}
\setlength{\oddsidemargin}{1in}

      \setlength{\topmargin}{-0in}
      \setlength{\textwidth}{6.5in}
      
      \hoffset -1in \setlength{\headheight}{0in}
      \setlength{\headsep}{0in} \setlength{\footskip}{.5in}

      \newtheorem{theorem}{Theorem}[section]

      \newtheorem{lemma}[theorem]{Lemma}

      \def\nn{\nonumber}
      \def\rf#1{\mbox{$(\ref{#1})$}}

      \def\be{\begin{equation}} 
      \def\ee{\end{equation}} 
      \def\beqn{\begin{eqnarray}} 
      \def\eeqn{\end{eqnarray}} 
      \def\beq{\begin{eqnarray*}} 
      \def\eeq{\end{eqnarray*}}
      \def\proof{{\bf Proof:}\ }
      \def\mb{\mbox} 
      \def\ga{\gamma} 
      \def\ep{\varepsilon} 
      \def\la{\lambda} 
      \def\ra{\rightarrow} 
      \def\lra{\longrightarrow} 



\newcommand{\eqdist}{\stackrel{\text{\upshape d}}{=}}

\def\ep{{\epsilon}}\def\ga{{\gamma}}

\def\<{\left<}\def\>{\right>}
\def\({\left(}\def\){\right)}

\newfam\msbmfam\font\tenmsbm=msbm10\textfont
\msbmfam=\tenmsbm\font\sevenmsbm=msbm7

\scriptfont\msbmfam=\sevenmsbm

      \begin{document}

      \title{\bf  Limit Theorems Associated With The Pitman-Yor Process}
      \author{ Shui Feng \\McMaster
      University \and  Fuqing Gao \\ Wuhan University \and Youzhou Zhou\\ Zhongnan University of Economics and Law}
      \date{February 20, 2016}
      \maketitle
      \begin{abstract}
      
            The Pitman-Yor process is a random discrete measure. The random weights or masses follow the two-parameter Poisson-Dirichlet distribution with parameters $0<\alpha<1, \theta>-\alpha$. The parameters $\alpha$ and $\theta$ correspond to the stable and gamma components, respectively. The distribution of atoms is given by a probability $\nu$.  In this article we consider the limit theorems for the Pitman-Yor process and the two-parameter Poisson-Dirichlet distribution. These include law of large numbers, fluctuations, and moderate or large  deviation principles. The limiting procedures involve either $\alpha$ tends to zero or one.  They arise naturally in genetics and physics such as  the asymptotic coalescence time for explosive branching process and  the approximation to generalized random energy model for disordered system.

      \vspace*{.125in} \noindent {\it Keywords:}  Pitman-Yor process, Explosive branching, Large deviations, Phase transition, Poisson-Dirichlet distribution, Random energy model.
     
      \vspace*{.125in} \noindent {\it AMS 2001 subject classifications:}
      Primary 60F10; secondary 92D10.
      \end{abstract}
      \section{Introduction}
      
       For any $0\leq \alpha<1, \theta+\alpha>0$, let $U_1(\alpha,\theta),U_2(\alpha,\theta),\ldots$ be a sequence of independent random variables with $U_i(\alpha,\theta)$ having distribution $Beta(1-\alpha,\theta+i\alpha)$ for $i \geq 1$. If we define
      \[
      V_1(\alpha,\theta) =U_1(\alpha,\theta), V_n(\alpha,\theta)=(1-U_1(\alpha,\theta))\cdots(1-U_{n-1}(\alpha,\theta))U_n(\alpha,\theta), \ \ n\geq 2,
      \]
      then the law of the decreasing order statistic $${\bf P}(\alpha,\theta)=(P_1(\alpha,\theta),P_2(\alpha,\theta),\ldots)$$ of $(V_1(\alpha,\theta),V_2(\alpha,\theta),\ldots)$ is the two-parameter Poisson-Dirichlet distribution $PD(\alpha,\theta)$. It is  a probability on the infinite-dimensional simplex
      \[
       \nabla_{\infty}=\{{\bf p}=(p_1,p_2,\ldots): p_1\geq p_2\geq \cdots \geq 0, \sum_{i=1}^{\infty}p_i \leq 1\}.   
       \]
      Let $S$ be  Polish space and $\nu$ a probability on $S$ satisfying $\nu(\{x\})=0$ for all $x$ in $S$. In this case we say $\nu$ is diffuse. The Pitman-Yor process with parameters $\alpha,\theta$ and $\nu$ is the random measure
     \[
    \Xi_{\alpha,\theta,\nu}=\sum_{i=1}^{\infty}P_i(\alpha,\theta)\delta_{\xi_i}. 
     \]
  where $\xi_1,\xi_2,\ldots$ are i.i.d. with common distribution $\nu$ and is independent of  ${\bf P}(\alpha,\theta)$.  The case $\alpha=0$
 corresponds to the Dirichlet process constructed in \cite{Fer73}.
       
      The distribution $PD(0,\theta)$ was introduced by Kingman in  \cite{Kingman75} as the law of relative jump sizes of a gamma subordinator over the interval $[0,\theta]$. It also arises in other context most notably population genetics.  The distribution  $PD(\alpha,0)$  was introduced in Kingman \cite{Kingman75} through the stable subordinator.
  In \cite{PPY92} and \cite{PY92},  $PD(\alpha,0)$ was constructed from the ranked length  of excursion intervals between zeros of a Brownian motion ($\alpha=1/2$) or a recurrent Bessel process of order $2(1-\alpha)$ for general $\alpha$.     
  
    In this paper we focus on the case $\theta=0$. Without the loss of generality, we choose the space $S$ to be $[0,1]$ and the probability $\nu$ to be the uniform distribution on $[0,1]$. This implies that the parameter $\alpha$ is in $(0,1)$. Our main objective is to study the asymptotic behaviour of $PD(\alpha,0)$ when $\alpha$ converges to zero, and the behaviour of both $PD(\alpha,0)$ and $\Xi_{\alpha,0,\nu}$ when $\alpha$ converges to one.  There are many scenarios  where  the limiting procedure of $\alpha$ approaching one or zero arises naturally.  We consider two examples below.  
      
    The first example is  Derrida's random energy model (REM) introduced in \cite{Derrida80} and \cite{Derrida81}. This is a toy model for disordered system such as spin glasses. For any $N\geq 1$, let $S_N=\{-1,1\}^N$ denote the configuration space. Then the REM is  a family of i.i.d. random variables $\{H_N(\sigma): \sigma \in S_N\}$ with common normal distribution of mean zero and variance $N$. Here $H_N(\sigma)$ is the Hamiltonian. Given the temperature $T$ and $\beta=T^{-1}$,  the Gibbs measure is a probability on $S_N$ given by
      \[
      Z_N^{-1}{\exp\{-\beta H_n(\sigma)}\}
      \]
      where
      \[
      Z_N =\sum_{\sigma \in S_N}\exp\{-\beta H_N(\sigma)\}
      \]
   is the partition function.  Let $T_c = \frac{1}{\sqrt{2\ln 2}}$ and $\alpha =\frac{T}{T_c}$. Then for $T <T_c$ or equivalently $\beta >\sqrt{2\ln 2}$,  the decreasing order statistic of the Gibbs measure is known (cf. \cite{Tala03}) to converge to the Poisson-Dirichlet distribution $PD(\alpha,0)$ as $N$ tends to infinity.  Thus $\alpha$ converging to zero corresponds to temperature going to zero while $\alpha$ converging to one corresponds to temperature rising to the critical value.  To account for correlations, the generalized random energy model (GREM) involving hierarchical levels was introduced and studied in  \cite{Derrida85} and \cite{DerGar86}.  The generalization to continuum levels was done in  \cite{BoSz98} and the genealogy of the hierarchical systems is described by the Bolthausen-Sznitman coalescent.  In deriving the infinitesimal rate of the coalescent (Proposition 4.11 in \cite{Ber06} ), one needs to consider the limit of  $PD(e^{-t},0)$ as $t$ converges to zero or equivalently $\alpha =e^{-t}$ converging to one. 
   
The second example is concerned with the coalescence  time for an explosive branching process. Consider a Galton-Watson branching process with offspring distribution in the domain of attraction of a stable law of index $0<\ga<1$.  Let $X_n$ denote the coalescence time of any two individuals choosing at random at generation $n$. Then it is shown in  \cite{Athreya12} that  $\lim_{n\ra \infty}P\{n-X_n \leq k\}$ exists and can be calculated explicitly through $PD(\ga^k,0)$. In this case,  $\alpha=\ga^k$ converging to zero corresponds to $k$  converging to infinity.

       There have been intensive studies of the asymptotic behaviour for the Poisson-Dirichlet distribution and the Pitman-Yor process in recent years with motivations from probability theory, population genetics, and Bayesian statistics (see \cite{Feng10} and the references therein). 
       The results in this paper not only generalize some earlier results but, more importantly, reveal  some surprising new structures.        
          
    The paper is organized as follows. In Section 2, we review the subordinator representation for $PD(\alpha,0)$.  Section 3 contains the law of large numbers, fluctuation, and moderate  deviations associated with $PD(\alpha,0)$ as $\alpha$ converges to zero or one.  In Section 4, we establish the large deviation principle for $\Xi_{\alpha,0,\nu}$ under the limit of $\alpha$ converging to one. We finish the paper in Section 5 with some concluding remarks.        
      
      \section{Subordinator Representation}
      
      For any $0<\alpha<1$, let $\rho_t$ be the stable subordinator with index $\alpha$ and L\'evy measure
      \[
      \Lambda_{\alpha}(d\,x)= \frac{\alpha}{\Gamma(1-\alpha)}x^{-(1+\alpha)}d\, x, \ \  x >0.
      \]
      
      The boundary case  $\alpha=1$ corresponds  to the straight line $\rho_t=t$. When $\alpha$ converges to zero, $\rho_t$ becomes a killed subordinator with killing rate one (\cite{Ber96}).
      
      For any $t >0$, let $J_1(\rho_t)\geq J_2(\rho_t)\geq \cdots$ denote the jump sizes of $\rho_t$ over the interval $[0,t]$. Then the following representation holds. 
      
      \begin{theorem}\label{pre-t1}{\rm (Perman, Pitman, and Yor \cite{PPY92})}
      For any $t >0$, the law of 
      \be\label{pre-q1}
      (\frac{J_1(\rho_t)}{\rho_t},\frac{J_2(\rho_t)}{\rho_t},\ldots)
      \ee
      is $PD(\alpha,0)$.
      \end{theorem}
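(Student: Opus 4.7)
The plan is to realize the normalized jumps as a size-biased permutation and identify it with the GEM stick-breaking construction that defines $PD(\alpha,0)$. By the L\'evy-It\^o decomposition, the jumps of $\rho_t$ on $[0,t]$ form a Poisson point process $\Pi$ on $(0,\infty)$ with intensity $t\Lambda_\alpha$, and $\rho_t=\sum_{x\in\Pi}x$. Because a ranked sequence and its size-biased permutation carry the same unordered information, it suffices to show that if $(\tilde J_k)_{k\geq 1}$ are the iterated size-biased picks from $\Pi$ and $\tilde P_k=\tilde J_k/\rho_t$, then the stick-breaking factors $U_k:=\tilde J_k/(\rho_t-\tilde J_1-\cdots-\tilde J_{k-1})$ are independent with $U_k\sim Beta(1-\alpha,k\alpha)$, matching the factors $U_k(\alpha,0)$ of the introduction.

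For the marginal law of $U_1=\tilde P_1$, the Campbell-Mecke/Slivnyak formula for the size-biased pick from a Poisson process yields
\be
\rE[h(\tilde P_1)]=\int_0^\infty \rE\Bigl[\frac{x}{x+\rho_t^{\prime}}\,h\Bigl(\frac{x}{x+\rho_t^{\prime}}\Bigr)\Bigr]\,t\,\Lambda_\alpha(dx),
\ee
where $\rho_t^{\prime}\eqdist\rho_t$ is an independent copy representing the total mass of the reduced Palm process. Substituting $u=x/(x+\rho_t^{\prime})$ and evaluating $\rE[\rho_t^{-\alpha}]=1/(t\alpha\Gamma(\alpha))$ via the Laplace transform $\rE[\re^{-s\rho_t}]=\re^{-ts^\alpha}$ produces precisely the $Beta(1-\alpha,\alpha)$ density for $U_1$.

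To obtain independence and the correct parameters at every stage, one iterates Campbell-Mecke:
\be
\rE[f(\tilde J_1,\ldots,\tilde J_n)]=\int\!\cdots\!\int\rE\Bigl[\prod_{k=1}^n\frac{x_k}{x_k+\cdots+x_n+\rho_t^{\prime}}\,f(x_1,\ldots,x_n)\Bigr]\prod_{k=1}^n t\,\Lambda_\alpha(dx_k).
\ee
Changing variables to $u_k=x_k/(x_k+\cdots+x_n+\rho_t^{\prime})$, the Jacobian together with the stable densities $x_k^{-1-\alpha}$ and the moment $\rE[\rho_t^{-n\alpha}]=t^{-n}\Gamma(n)/(\alpha\Gamma(n\alpha))$ factor the integrand as a product of $Beta(1-\alpha,k\alpha)$ densities in $u_k$, $k=1,\ldots,n$. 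This gives the GEM representation of $(\tilde P_1,\tilde P_2,\ldots)$, whose decreasing rearrangement is $PD(\alpha,0)$.

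The delicate point is the bookkeeping in the iteration: the Campbell-Mecke weights, the stable densities, the change-of-variables Jacobian, and the Gamma-moment factor must align exactly to produce the Beta shape parameter $k\alpha$ at stage $k$. This cascade of Beta-Gamma identities, keyed to the recursion $\Gamma(k\alpha+1)=k\alpha\,\Gamma(k\alpha)$, is what distinguishes the $\theta=0$ case and singles out the stable subordinator as the canonical probabilistic object carrying $PD(\alpha,0)$.
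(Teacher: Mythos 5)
The paper offers no proof of this statement at all: it is quoted as a known theorem of Perman, Pitman, and Yor with a bare citation to \cite{PPY92}, so there is nothing internal to compare your argument against. What you have written is, in substance, the standard proof from the cited source (see also Pitman--Yor 1997), and it is correct. The Mecke/Palm identity for the size-biased pick is right, and your quantitative anchors check out: the substitution $u=x/(x+s)$ turns $x^{-1-\alpha}\,dx$ weighted by $x/(x+s)$ into $s^{-\alpha}u^{-\alpha}(1-u)^{\alpha-1}\,du$, and $\rE[\rho_t^{-n\alpha}]=t^{-n}\Gamma(n)/(\alpha\Gamma(n\alpha))$ agrees with the Mittag--Leffler moment formula \rf{pre-q4}; multiplying the $n$ intensity normalizations $t\alpha/\Gamma(1-\alpha)$ by this moment gives $\alpha^{n-1}\Gamma(n)/(\Gamma(1-\alpha)^n\Gamma(n\alpha))$, which is exactly $\prod_{k=1}^n B(1-\alpha,k\alpha)^{-1}$ by the telescoping $\Gamma(1+(k-1)\alpha)/\Gamma(k\alpha)=(k-1)\alpha\,\Gamma((k-1)\alpha)/\Gamma(k\alpha)$ — the ``cascade of Beta--Gamma identities'' you point to. Two small points you should make explicit to close the argument: first, passing from the size-biased permutation back to the ranked sequence requires that the jumps are a.s.\ distinct and that the size-biased picks a.s.\ exhaust the total mass $\rho_t$ (the subordinator is pure-jump with no drift), so that the decreasing rearrangement of $(\tilde P_k)$ is exactly \rf{pre-q1} and the residual stick $\prod_{j\le k}(1-U_j)\to 0$ matches the $\theta=0$ GEM of the introduction; second, the iterated identity should be stated via the multivariate Mecke equation for the factorial measure, i.e.\ over $n$-tuples of \emph{distinct} points of $\Pi$. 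With those remarks added, your proof is complete and is the canonical one.
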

      
      For any $n \geq 1$, let $Z_n =\Lambda_{\alpha}(J_n(\rho_1), \infty)$. Then $Z_1 <Z_2<\ldots$ and $Z_1, Z_2-Z_1, Z_3-Z_2,\ldots$ are i.i.d. exponential random variables with parameter $1$. 
      Noting that $\Lambda_{\alpha}(x,\infty)= \frac{x^{-\alpha}}{\Gamma(1-\alpha)}$, it follows that
      \be\label{q1'}
      \frac{J_n(\rho_1)}{\rho_1}= \frac{Z_n^{-1/\alpha}}{\sum_{i=1}^\infty Z_i^{-1/\alpha}}
      \ee
      and 
      \be\label{q2'}
      \rho_1= \Gamma(1-\alpha)^{-1/\alpha} \sum_{i=1}^\infty Z_i^{-1/\alpha}.
      \ee
        Thus by Theorem~\ref{pre-t1} the law of
        
        \be\label{pre-q2}
        (\frac{Z_1^{-1/\alpha}}{\sum_{i=1}^{\infty}Z_i^{-1/\alpha}}, \frac{Z_2^{-1/\alpha}}{\sum_{i=1}^{\infty}Z_i^{-1/\alpha}}, \ldots)
        \ee
        is $PD(\alpha,0)$.   
             
      \section{Limit Theorems for $PD(\alpha,0)$}
      
     Let $${\bf P}(\alpha,0)=(P_1(\alpha,0), P_2(\alpha,0), \ldots)$$ and 
     $$\varphi_2({\bf P}(\alpha,0)):=\sum_{i=1}^{\infty}P_i^2(\alpha,0).$$ 
     A direct application of Pitman's sampling formula (\cite{Pitman1992}, \cite{Pitman06}) leads to
      \[
     \mathbb{E}_{\alpha,0}[\varphi_2({\bf P}(\alpha,0))] = 1-\alpha.      
      \]
      This implies that ${\bf P}(\alpha,0)$ converges in probability to $(1,0,\ldots)$ and $(0,0,\ldots)$ as $\alpha$ converges to $0$ and $1$, respectively. The objective of this section is to obtain more detailed information associated with these limits including fluctuation and large deviations.

\subsection{Convergence and Limit}

 For any $n \geq 1$, set
        \[
        P_n(\alpha,0)=\frac{Z_n^{-1/\alpha}}{\sum_{i=1}^{\infty}Z_i^{-1/\alpha}}        
        \]

Let $0<\ga(\alpha) \leq 1$ and $\iota(\alpha)>0$ be such that
\be\label{scale-e1}
 \lim_{\alpha \ra 0}\frac{\ga(\alpha)}{\alpha}=c_1 \in [0,+\infty]
\ee
and 
\be\label{scale-e2}
\lim_{\alpha \ra 1}\frac{\iota(\alpha)}{\Gamma(1-\alpha)} =c_2 \in [0,\infty).
\ee

\begin{theorem}\label{sec3-t1}
Let 
\[
{\bf P}^{\ga(\alpha)}(\alpha,0)=(P_1^{\ga(\alpha)}(\alpha,0), P_2^{\ga(\alpha)}(\alpha,0), \ldots).
\]
If $c_1$ is  finite, then  
 ${\bf P}^{\ga(\alpha)}(\alpha,0)$ converges  almost surely to $(1, (\frac{Z_1}{Z_2})^{c_1},(\frac{Z_1}{Z_3})^{c_1},\ldots)$ as $\alpha$ converges to $0$.  If $c_1=\infty$, then  ${\bf P}^{\ga(\alpha)}(\alpha,0)$ converges  to $(1,0,\ldots)$ in probability as $\alpha$ converges to $0$.
\end{theorem}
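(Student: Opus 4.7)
The plan is to work directly with the Perman--Pitman--Yor representation \rf{pre-q2} and factor out the dominant term $Z_1^{-1/\alpha}$, which yields the identity
\be
P_n(\alpha,0)^{\ga(\alpha)} = \frac{(Z_1/Z_n)^{\ga(\alpha)/\alpha}}{\bigl(\sum_{i \geq 1}(Z_1/Z_i)^{1/\alpha}\bigr)^{\ga(\alpha)}}.
\ee
This rewriting is convenient because every base now lies in $(0,1]$, so exponents of either sign are straightforward to control using the hypothesis \rf{scale-e1}.

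The core analytic step is to show that, almost surely, $S(\alpha):=\sum_{i \geq 1}(Z_1/Z_i)^{1/\alpha}$ converges to $1$ as $\alpha \ra 0$. Since the $i=1$ term already equals $1$, it suffices to kill the tail $\sum_{i \geq 2}(Z_1/Z_i)^{1/\alpha}$ by a dominated-convergence argument: fix any $\alpha_0 \in (0,1)$, and observe that for $\alpha \leq \alpha_0$ and $i \geq 2$, $(Z_1/Z_i)^{1/\alpha} \leq (Z_1/Z_i)^{1/\alpha_0}$, because $Z_1/Z_i < 1$ almost surely. The dominating series $Z_1^{1/\alpha_0}\sum_i Z_i^{-1/\alpha_0}$ is a.s.\ finite, since the strong law of large numbers gives $Z_i \sim i$ and hence tail summability comparable to $\sum i^{-1/\alpha_0}$ with $1/\alpha_0 > 1$. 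Each summand in the tail tends to $0$ pointwise, so $S(\alpha) \ra 1$ a.s.

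With $S(\alpha) \ra 1$ in hand, the case $c_1 \in [0,\infty)$ falls out immediately: \rf{scale-e1} forces $\ga(\alpha) \ra 0$, so $S(\alpha)^{\ga(\alpha)} \ra 1$, while $(Z_1/Z_n)^{\ga(\alpha)/\alpha} \ra (Z_1/Z_n)^{c_1}$ for every $n$; together these give the claimed coordinate-wise a.s.\ limit. For $c_1 = \infty$ I would split on $n$: at $n=1$, $P_1(\alpha,0)=1/S(\alpha) \ra 1$ combined with $\ga(\alpha) \in (0,1]$ gives $P_1(\alpha,0)^{\ga(\alpha)} \ra 1$; for $n \geq 2$, retaining only the $i=1$ term in the denominator of $P_n(\alpha,0)$ yields the crude bound $P_n(\alpha,0) \leq (Z_1/Z_n)^{1/\alpha}$, whence $P_n(\alpha,0)^{\ga(\alpha)} \leq (Z_1/Z_n)^{\ga(\alpha)/\alpha} \ra 0$ a.s., because $Z_1/Z_n<1$ a.s.\ and $\ga(\alpha)/\alpha \ra \infty$.

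The main obstacle is producing the uniform-in-$\alpha$ a.s.\ dominator that drives $S(\alpha) \ra 1$; once that is established, the rest is bookkeeping with the scaling assumptions and no machinery beyond the subordinator representation and dominated convergence is required.
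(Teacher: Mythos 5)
Your proof is correct and rests on the same foundation as the paper's: the representation \rf{pre-q2} and the almost sure dominance of the $Z_1$ term in $\sum_i Z_i^{-1/\alpha}$ as $\alpha\ra 0$. The paper phrases this as convergence of the $\ell^{1/\alpha}$ norm of $(Z_1^{-1},Z_2^{-1},\dots)$ to the $\ell^\infty$ norm $Z_1^{-1}$ and then applies continuity to the exponent $\ga(\alpha)/\alpha$; your normalization $S(\alpha)=\sum_i (Z_1/Z_i)^{1/\alpha}\ra 1$ a.s.\ is the same fact, proved self-containedly by dominated convergence with the dominating series justified via the strong law ($Z_i\sim i$). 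Where you genuinely diverge is the case $c_1=\infty$: for the first coordinate the paper falls back on the moment bound $\mathbb{E}[P_1(\alpha,0)]\geq \mathbb{E}[\varphi_2({\bf P}(\alpha,0))]=1-\alpha$ and so only concludes convergence in probability, whereas your observation that $P_1(\alpha,0)=1/S(\alpha)\leq P_1(\alpha,0)^{\ga(\alpha)}\leq 1$ gives almost sure convergence of every coordinate, a strictly stronger conclusion than the theorem states (and than the paper proves). Your bound $P_n(\alpha,0)^{\ga(\alpha)}\leq (Z_1/Z_n)^{\ga(\alpha)/\alpha}$ for $n\geq 2$ plays the same role as the paper's comparison $P_n^{\ga(\alpha)}\leq P_n^M$ for $\ga(\alpha)/\alpha>M$. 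No gaps.
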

\proof  Set
\[
\tilde{\bf Z}= (Z_1^{-1}, Z_2^{-1}, \ldots).
\]

Then we have  
\[
(\sum_{i=1}^{\infty}Z_i^{-1/\alpha})^{\alpha}= ||\tilde{\bf Z}||_{1/\alpha}.
\]

When $\alpha$ approaches to zero, $||\tilde{\bf Z}||_{1/\alpha}$ converges almost surely to $||\tilde{\bf Z}||_{\infty}=Z^{-1}_1$. This implies that
\[
{\bf P}^{\alpha}(\alpha,0)=(\frac{Z^{-1}_1}{||\tilde{\bf Z}||_{1/\alpha}}, \frac{Z_2^{-1}}{||\tilde{\bf Z}||_{1/\alpha}}, \ldots)
\]
converges almost surely to $(1, \frac{Z_1}{Z_2}, \frac{Z_1}{Z_3}, \ldots)$ as $\alpha$ converges to zero. Writing ${\bf P}^{\ga(\alpha)}(\alpha,0)$ as
\[
((\frac{Z^{-1}_1}{||\tilde{\bf Z}||_{1/\alpha}})^{\ga(\alpha)/\alpha},(\frac{Z_2^{-1}}{||\tilde{\bf Z}||_{1/\alpha}})^{\ga(\alpha)/\alpha}, \ldots).
\]
Then by  continuity we obtain that ${\bf P}^{\ga(\alpha)}(\alpha,0)$ converges almost surely to $(1, (\frac{Z_1}{Z_2})^{c_1},(\frac{Z_1}{Z_3})^{c_1},\ldots)$ as $\alpha$ converges to zero. If $c_1=\infty$, then for any $M\geq 1$ one has $\frac{\ga(\alpha)}{\alpha} > M$ for small enough $\alpha$. Thus for any $n > 1$
\[
\lim_{\alpha \ra 0}P_n^{\ga(\alpha)}(\alpha,0) \leq \lim_{\alpha \ra 0}P_n^M(\alpha,0) =(\frac{Z_1}{Z_n})^M. 
\]  
Since $M$ is arbitrary,  we obtain
\[
\lim_{\alpha \ra 0}P_n^{\ga(\alpha)}(\alpha,0)=0, \ a.s., \ n>1.
\]
Finally for $n=1$, we have
$$P_1(\alpha,0)\leq P_1^{\ga(\alpha)}(\alpha,0)\leq 1.$$
Noting that
\[
\mathbb{E}[P_1(\alpha,0)] \leq \mathbb{E}[\varphi_2({\bf P}(\alpha,0))]=1-\alpha.
\]
It follows that $P_1(\alpha,0)$ converges to $1$ in probability which implies that $P_1^{\ga(\alpha)}(\alpha,0)$ converges to one in probability.
\hfill  $\Box$

\begin{theorem}\label{sec3-t2}
As $\alpha$ converges to $1$, $\iota(\alpha){\bf P}(\alpha,0)$ converges in probability to $c_2 \, (Z_1^{-1}, Z_2^{-1}, \ldots)$.
\end{theorem}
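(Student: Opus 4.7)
The plan is to invoke the subordinator representation developed in Section~2. Using \rf{q1'} and \rf{q2'}, write
\[
\iota(\alpha)\, P_n(\alpha,0) \;=\; \frac{\iota(\alpha)\, Z_n^{-1/\alpha}}{\sum_{i=1}^{\infty} Z_i^{-1/\alpha}} \;=\; \frac{\iota(\alpha)}{\Gamma(1-\alpha)^{1/\alpha}} \cdot \frac{Z_n^{-1/\alpha}}{\rho_1}.
\]
The analysis then reduces to controlling three factors as $\alpha \to 1$: the deterministic prefactor, the numerator $Z_n^{-1/\alpha}$, and the stable subordinator $\rho_1$.

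Two of these are easy. For the prefactor,
\[
\frac{\iota(\alpha)}{\Gamma(1-\alpha)^{1/\alpha}} \;=\; \frac{\iota(\alpha)}{\Gamma(1-\alpha)} \cdot \Gamma(1-\alpha)^{(\alpha-1)/\alpha};
\]
the first ratio converges to $c_2$ by \rf{scale-e2}, while the identity $\Gamma(1-\alpha) = \Gamma(2-\alpha)/(1-\alpha)$ yields $\log\Gamma(1-\alpha) \sim -\log(1-\alpha)$ as $\alpha \to 1$, so $((\alpha-1)/\alpha)\log\Gamma(1-\alpha) \to 0$ and the second factor tends to $1$. The numerator satisfies $Z_n^{-1/\alpha} \to Z_n^{-1}$ almost surely by continuity of $\alpha \mapsto Z_n^{-1/\alpha}$ for each fixed $n$.

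The essential step, and the only genuine probabilistic input, is to show $\rho_1 \to 1$ in probability as $\alpha \to 1$. I would argue this via Laplace transforms: for any $\lambda > 0$,
\[
\mathbb{E}[e^{-\lambda \rho_1}] \;=\; \exp\!\left(-\int_0^\infty (1 - e^{-\lambda x}) \frac{\alpha}{\Gamma(1-\alpha)} x^{-1-\alpha}\, dx\right) \;=\; e^{-\lambda^\alpha},
\]
the second equality following from a standard integration by parts on the L\'evy exponent. As $\alpha \to 1$, $e^{-\lambda^\alpha} \to e^{-\lambda}$, the Laplace transform of $\delta_1$, so $\rho_1 \to 1$ in distribution, which for a deterministic limit is equivalent to convergence in probability. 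Assembling the three pieces yields $\iota(\alpha) P_n(\alpha,0) \to c_2 Z_n^{-1}$ in probability for each fixed $n$, which gives convergence of the whole sequence in $\nabla_\infty$ equipped with the product topology. The only real obstacle is the convergence of $\rho_1$, but the Laplace transform computation is exactly what the normalization $\alpha/\Gamma(1-\alpha)$ in $\Lambda_{\alpha}$ is tailored to produce.
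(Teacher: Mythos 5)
Your proposal is correct, and it follows the same overall route as the paper: both proofs start from the representation \rf{q2'}, peel off the deterministic prefactor $\iota(\alpha)/\Gamma(1-\alpha)^{1/\alpha}$ (your observation that $\Gamma(1-\alpha)^{(\alpha-1)/\alpha}\to 1$ because $(1-\alpha)\log(1-\alpha)\to 0$ is exactly the cancellation the paper uses), note that $Z_n^{-1/\alpha}\to Z_n^{-1}$ almost surely, and reduce everything to showing that $\rho_1$ concentrates at $1$. The one genuine difference is how that last concentration step is handled. The paper works with $S_\alpha=\rho_1^{-\alpha}$, uses the Mittag-Leffler moment formula $\mathbb{E}[S_\alpha^r]=\Gamma(r+1)/\Gamma(\alpha r+1)$ to get $\mathbb{E}[(S_\alpha-1)^2]\to 0$, and concludes by an $L^2$/Chebyshev argument. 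You instead compute the Laplace transform $\mathbb{E}[e^{-\lambda\rho_1}]=e^{-\lambda^\alpha}\to e^{-\lambda}$ and invoke the continuity theorem plus the fact that convergence in distribution to a constant implies convergence in probability. Both arguments are elementary and correct; yours is arguably cleaner since it avoids introducing the Mittag-Leffler density, while the paper's second-moment computation has the side benefit of quantifying the rate at which $S_\alpha$ concentrates. Your final assembly (product of an a.s.-convergent factor and an in-probability-convergent factor, then coordinatewise convergence giving convergence in the product topology) is also the same as the paper's, so there is no gap.
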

\proof Let $S_{\alpha}= \rho_1^{-\alpha}$. Then the law of $S_{\alpha}$ is the Mittag-Leffler distribution with density function 
        \[
        g_{\alpha}(s)=\sum_{k=0}^{\infty}\frac{(-s)^k}{k!}\Gamma(\alpha k+\alpha+1)\frac{\sin(\alpha k\pi )}{\alpha k \pi} 
        \]
        and  
        \beqn
         &&\sum_{i=1}^\infty Z_i^{-1/\alpha}= (\frac{S_{\alpha}}{\Gamma(1-\alpha)})^{-1/\alpha}\label{pre-q3}\\
         && \mathbb{E}[S_{\alpha}^r]= \frac{\Gamma(r+1)}{\Gamma(\alpha r+1)}, \ r>-1\label{pre-q4}.
        \eeqn
        This implies that 
         \beq
        \mathbb{E}[(S_{\alpha}-1)^2]&=& \frac{2}{\Gamma(2\alpha+1)}- \frac{2}{\Gamma(\alpha +1)} +1\\
        &\ra& 0,\ \ \alpha \ra 1.  
        \eeq
       Hence $S_{\alpha}$ converges to $1$ in probability as $\alpha$ converges to $1$.
        
         By \rf{q2'}, one has 
\beq
\iota(\alpha){\bf P}(\alpha,0)&=& \frac{\iota(\alpha)}{\Gamma(1-\alpha)}\Gamma(1-\alpha)^{1-\frac{1}{\alpha}}((\frac{Z_1}{S_{\alpha}})^{-1/\alpha}, (\frac{Z_2}{S_{\alpha}})^{-1/\alpha},\ldots)\\
&=&\frac{\iota(\alpha)}{\Gamma(1-\alpha)}\Gamma(1-\alpha)^{1-\frac{1}{\alpha}}\exp\{\frac{1}{\alpha}\log S_{\alpha}\}(Z_1^{-1/\alpha},Z_2^{-1/\alpha}, \ldots).\eeq
Since $S_{\alpha}$ converges to one in probability and $(Z_1^{-1/\alpha}, Z_2^{-1/\alpha}, \ldots)$ converges to $(Z_1^{-1}, Z_2^{-1}, \ldots)$ almost surely as $\alpha$ converges to one, we conclude that $\iota(\alpha){\bf P}(\alpha,0)$ converges  to $c_2\, (Z_1^{-1}, Z_2^{-1}, \ldots)$ in probability.

\hfill $\Box$

\subsection{Large Deviations}

  In this section we consider the large deviations associated with  the deterministic limits obtained in Theorem~\ref{sec3-t1}. In comparison with the large deviations associated with ${\bf P}(\alpha,0)$ these results can be viewed as moderate deviations for ${\bf P}(\alpha,0)$.  We prove these results  through a series of lemmas.

For any $n \geq 1$ let
\[
R_n =\frac{P_{n+1}(\alpha,0)}{P_n(\alpha,0)}.
\]
Then  $\{R_n:n \geq1\}$ is a sequence of  independent beta random variables with each $R_n$ having the  $beta(n\alpha,0)$ distribution (Proposition 8 in \cite{PitmanYor97}).  
\begin{lemma}\label{ldp-l1}
Let ${\bf R}^{\ga(\alpha)}=(R_1^{\ga(\alpha)}, R_2^{\ga(\alpha)}, \ldots)$. As $\alpha$ converges to $0$, 
 large deviation principles hold for ${\bf R}^{\ga(\alpha)}$ on space $[0,1]^{\infty}$ with respective speeds and  rate functions 
 $(\frac{\alpha}{\ga(\alpha)}, J_1(\cdot))$ and $(\log \frac{\ga(\alpha)}{\alpha}, J_2(\cdot))$ depending on whether $c_1=0$ or $c_1=\infty$, where

  \[
J_1({\bf x})=\left\{ \begin{array}{ll}
\sum_{n=1}^\infty n\log \frac{1}{x_{n}},&  x_n>0 \ \mb{for all}\ n>1,\\
+\infty,& otherwise.
\end{array}
\right.
\]
 and  
   
   \[
J_2({\bf x})=\#\{n\geq 1: x_n >0\}.
\]    
\end{lemma}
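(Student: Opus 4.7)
The strategy is to reduce the joint LDP on $[0,1]^\infty$ to one-dimensional LDPs for the independent coordinates $R_n^{\ga(\alpha)}$ and then lift to the infinite product by a projective-limit (Dawson--G\"artner) argument. The starting point is the explicit law of $R_n$: since $R_n$ has the $beta(n\alpha,0)$ distribution its cumulative distribution function is $F_{R_n}(x)=x^{n\alpha}$ on $[0,1]$, hence
\[
P\bigl(R_n^{\ga(\alpha)}\leq x\bigr)=x^{n\alpha/\ga(\alpha)},\qquad x\in[0,1].
\]

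In the regime $c_1=0$, the speed is $s(\alpha)=\alpha/\ga(\alpha)\to\infty$ and the identity
\[
\frac{1}{s(\alpha)}\log P\bigl(R_n^{\ga(\alpha)}\leq x\bigr)=n\log x
\]
immediately gives an LDP for $R_n^{\ga(\alpha)}$ on $[0,1]$ with good rate function $I_n^{(1)}(x)=n\log(1/x)$ for $x\in(0,1]$ and $I_n^{(1)}(0)=+\infty$, the upper/lower bounds on arbitrary Borel sets following from monotonicity of $F_{R_n^{\ga(\alpha)}}$. In the regime $c_1=\infty$, the speed is $s(\alpha)=\log(\ga(\alpha)/\alpha)\to\infty$ and for fixed $x\in(0,1)$
\[
P\bigl(R_n^{\ga(\alpha)}>x\bigr)=1-x^{n\alpha/\ga(\alpha)}\sim \frac{n\alpha}{\ga(\alpha)}\log\frac{1}{x},
\]
so $s(\alpha)^{-1}\log P(R_n^{\ga(\alpha)}>x)\to -1$; combined with $P(R_n^{\ga(\alpha)}\leq \ep)\to 1$ for every $\ep>0$ this yields the LDP with rate function $I_n^{(2)}(x)=\mathbf{1}_{\{x>0\}}$.

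The joint LDP is then assembled in two steps. For each finite $k$, independence of $R_1,\ldots,R_k$ together with the common-speed one-dimensional LDPs produces, on the compact space $[0,1]^k$, an LDP with rate function $\sum_{n=1}^k I_n^{(j)}(x_n)$ for $j=1,2$. Since the coordinate projections $\pi_k:[0,1]^\infty\to[0,1]^k$ exhibit $[0,1]^\infty$ as the projective limit of these finite products, the Dawson--G\"artner theorem lifts the finite LDPs to $[0,1]^\infty$ with rate function
\[
J^{(j)}(\mathbf{x})=\sup_k\sum_{n=1}^k I_n^{(j)}(x_n)=\sum_{n=1}^\infty I_n^{(j)}(x_n),
\]
which matches $J_1$ (resp.\ $J_2$) in the statement.

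The main technical delicacy I expect is the verification of the one-dimensional LDP when $c_1=\infty$: the rate function $\mathbf{1}_{\{x>0\}}$ is discontinuous at $0$ and the logarithmic speed $\log(\ga(\alpha)/\alpha)$ is only mildly divergent, so the upper bound must be argued carefully on closed sets bounded away from $0$, while the lower bound near $0$ exploits the exact asymptotic $P(R_n^{\ga(\alpha)}\leq \ep)=\ep^{n\alpha/\ga(\alpha)}\to 1$. Once the one-dimensional pieces are in hand, compactness of $[0,1]^\infty$ makes both rate functions automatically good, and the projective-limit lifting is routine.
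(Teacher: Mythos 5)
Your proposal is correct and follows essentially the same route as the paper: one-dimensional LDPs for each coordinate $R_n^{\ga(\alpha)}$ derived from the explicit law $\mathbb{P}\{R_n\leq x\}=x^{n\alpha}$, then independence and the product/projective-limit structure of $[0,1]^{\infty}$ to obtain the joint LDP with the additive rate functions $J_1$ and $J_2$. The paper phrases the one-dimensional step as local ball-probability estimates upgraded to an LDP by compactness of $[0,1]$, which is exactly what your explicit CDF computations deliver.
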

\proof Assume that $c_1=0$. For any $n\geq 1$ and any $x$ in $[0,1]$, one has
\beq
n\log x \leq \lim_{\delta\ra 0}\liminf_{\alpha \ra 0}\frac{\ga(\alpha)}{\alpha}\log \mathbb{P}\{|R_n^{\ga(\alpha)}-x|<\delta\}\\
n\log x \geq \lim_{\delta\ra 0}\limsup_{\alpha \ra 0}\frac{\ga(\alpha)}{\alpha}\log \mathbb{P}\{|R_n^{\ga(\alpha)}-x|\leq\delta\}.
\eeq
This combined with the compactness of $[0,1]$ implies that $R_n^{\ga(\alpha)}$ satisfies a large deviation principle on $[0,1]$ with speed $\frac{\alpha}{\ga(\alpha)}$ and rate function $n\log x$.   Similarly for $c_1=\infty$, we have  
\beq
-\chi_{\{x>0\}} \leq \lim_{\delta\ra 0}\liminf_{\alpha \ra 0}(\log\frac{\ga(\alpha)}{\alpha})^{-1}\log \mathbb{P}\{|R_n^{\ga(\alpha)}-x|<\delta\}\\
-\chi_{\{x>0\}} \geq \lim_{\delta\ra 0}\limsup_{\alpha \ra 0}(\log\frac{\ga(\alpha)}{\alpha})^{-1}\log \mathbb{P}\{|R_n^{\ga(\alpha)}-x|\leq\delta\}.
\eeq
These combined with the independence of  $R_1, R_2, \dots$ imply the large deviations for ${\bf R}^{\ga(\alpha)}$.\hfill $\Box$

\begin{lemma}\label{ldp-l2}
There exists  $\delta \geq 1$ such that for any $\la <\delta$ 
\be\label{ldp-e5}
\mathbb{E}[\exp\{\la(1-\alpha)(P^{-1}_1(\alpha,0)-1)\}]= (1+A_{\la,\alpha})^{-1}<\infty
\ee
where
 \[
A_{\la,\alpha}=\alpha \int_0^1(1-e^{\la(1-\alpha)z})z^{-(1+\alpha)}d\,z.
\]
\end{lemma}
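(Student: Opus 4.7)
The plan is to use the subordinator representation from Section 2. Since $P_1(\alpha,0)=J_1(\rho_1)/\rho_1$ where $J_1\geq J_2\geq\cdots$ are the ordered jumps of $\rho$ on $[0,1]$, we have
\[
P_1^{-1}(\alpha,0)-1=\frac{1}{J_1}\sum_{i\geq 2}J_i.
\]
The key observation is that $(J_i)_{i\geq 1}$ forms a Poisson point process on $(0,\infty)$ with intensity $\Lambda_\alpha$. By a standard property of Poisson point processes, conditional on $J_1=x$ the remaining jumps form a Poisson point process on $(0,x)$ with intensity $\Lambda_\alpha$ restricted to $(0,x)$, while $J_1$ itself has the distribution determined by $\PP(J_1\leq x)=\exp(-\Lambda_\alpha((x,\infty)))=\exp(-x^{-\alpha}/\Gamma(1-\alpha))$.

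The first step is to compute the conditional Laplace transform by Campbell's formula. With $\mu=\lambda(1-\alpha)/x$,
\[
\EE\!\left[\exp\!\Big(\mu\sum_{i\geq 2}J_i\Big)\,\Big|\,J_1=x\right]=\exp\!\left(\int_0^x(e^{\mu y}-1)\,\Lambda_\alpha(dy)\right).
\]
Substituting $z=y/x$ and using $\Lambda_\alpha(dy)=\alpha\Gamma(1-\alpha)^{-1}y^{-(1+\alpha)}dy$ produces the homogeneity that collapses the integral to
\[
\exp\!\left(-\frac{x^{-\alpha}}{\Gamma(1-\alpha)}A_{\lambda,\alpha}\right).
\]
The second step integrates against the density $f_{J_1}(x)=\alpha\Gamma(1-\alpha)^{-1}x^{-\alpha-1}\exp(-x^{-\alpha}/\Gamma(1-\alpha))$, yielding
\[
\EE[\exp(\lambda(1-\alpha)(P_1^{-1}(\alpha,0)-1))]=\int_0^\infty\frac{\alpha x^{-\alpha-1}}{\Gamma(1-\alpha)}\exp\!\left(-\frac{x^{-\alpha}}{\Gamma(1-\alpha)}(1+A_{\lambda,\alpha})\right)dx.
\]
The change of variables $u=x^{-\alpha}/\Gamma(1-\alpha)$ turns this into $\int_0^\infty e^{-u(1+A_{\lambda,\alpha})}\,du=(1+A_{\lambda,\alpha})^{-1}$, which is the claimed identity.

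The final step is to locate $\delta\geq 1$. Integrability of $A_{\lambda,\alpha}$ near $z=0$ follows from $1-e^{\lambda(1-\alpha)z}=O(z)$ combined with $\alpha<1$, so $A_{\lambda,\alpha}$ is always finite. For $\lambda\leq 0$ one has $A_{\lambda,\alpha}\geq 0$ trivially. For $\lambda>0$ the bound $e^y-1\leq ye^y$ on $y\geq 0$ gives
\[
A_{\lambda,\alpha}\geq-\alpha\int_0^1\lambda(1-\alpha)z\,e^{\lambda(1-\alpha)z}z^{-(1+\alpha)}dz\geq-\lambda\alpha e^{\lambda(1-\alpha)},
\]
and at $\lambda=1$ this equals $-\alpha e^{1-\alpha}$, which is strictly greater than $-1$ on $\alpha\in(0,1)$ since $f(\alpha)=\alpha e^{1-\alpha}$ is increasing with $f(1)=1$. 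Hence $1+A_{\lambda,\alpha}>0$ for all $\lambda\leq 1$, giving $\delta\geq 1$.

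The only genuinely delicate point is the conditional description of the atoms given the maximum; once that is in place the calculation is a clean two-line exponential integral, and the $\delta\geq 1$ claim reduces to the elementary inequality $\alpha e^{1-\alpha}<1$ on $(0,1)$.
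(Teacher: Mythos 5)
Your proof is correct and follows essentially the same route as the paper: the identity $(1+A_{\la,\alpha})^{-1}$ is Kingman's formula, which the paper obtains by citing equation (77) of Kingman (1975) together with Campbell's theorem, and which you rederive in full from the Poisson point process structure of the stable subordinator's jumps (conditioning on $J_1$, applying Campbell's formula, and integrating out). The only cosmetic difference is in establishing $\delta\geq 1$: the paper uses an integration-by-parts identity for $A_{\la,\alpha}$, while you use the elementary bound $e^y-1\leq y e^y$ to get $A_{\la,\alpha}\geq -\la\alpha e^{\la(1-\alpha)}>-1$ for $\la\leq 1$; both arguments are valid and yield the same uniform-in-$\alpha$ conclusion.
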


\proof  Clearly $A_{\la,\alpha}$ is nonnegative for $\la \leq 0$, and converges to negative infinity as $\la$ tends to positive infinity.  It is known (equation (77) in \cite{Kingman75}) that 
\be\label{ldp-e3}
\mathbb{E}[\exp\{\la(1-\alpha)(P^{-1}_1(\alpha,0)-1)\}]= (1+A_{\la,\alpha})^{-1}<\infty
\ee
for $\la \leq 0$. For $\la>0$,  we have
\beqn\label{ldp-e4}
A_{\la, \alpha}&=& (1-\la)e^{\la(1-\alpha)}-1 +\la^2(1-\alpha)\int_0^1 z^{1-\alpha}e^{\la(1-\alpha)z}d\,z\\
&\geq& (1-\la) e^{\la(1-\alpha)} -1 +\la^2(1-\alpha)\int_0^1z^{1-\alpha}e^{\la(1-\alpha)z}d\,z.\nn
 \eeqn
 
 If we define
 $$\la_{\alpha}= \sup\{\la>0:A_{\la,\alpha} +1>0\},$$
then $\la_{\alpha}\geq 1$ by \rf{ldp-e4}  and 
\[
\delta=\inf\{\la_{\alpha}: 0<\alpha<1\}\geq 1.
\]

 By Campbell's theorem \rf{ldp-e5} holds for any $\la<\delta$. 

\hfill $\Box$
\begin{lemma}\label{ldp-l3}
Let $\ep>0$ be arbitrarily given. If $c_1=0$, then 

\be\label{ldp-e7}
\limsup_{\alpha \ra 0}\frac{\ga(\alpha)}{\alpha}\log \mathbb{P}\{|P_1^{\ga(\alpha)}(\alpha,0)-1|>\ep\}=-\infty.
\ee
If $c_1=\infty$ and
\be\label{ldp-e11}
\lim_{\alpha \ra 0}\ga(\alpha)=0,
\ee
then
\be\label{ldp-e6}
\limsup_{\alpha \ra 0}\frac{1}{\log \frac{\ga(\alpha)}{\alpha}}\log \mathbb{P}\{|P_1^{\ga(\alpha)}(\alpha,0)-1|>\ep\}=-\infty.
\ee
\end{lemma}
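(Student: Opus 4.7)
The plan is to reduce the event to a tail for $P_1^{-1}(\alpha,0)-1$, then apply a Chernoff bound based on the moment generating function of Lemma~\ref{ldp-l2}. Since $P_1(\alpha,0)\leq 1$ almost surely, $\{|P_1^{\ga(\alpha)}(\alpha,0)-1|>\ep\}=\{P_1^{-1}(\alpha,0)-1>t_\alpha\}$, where $t_\alpha:=(1-\ep)^{-1/\ga(\alpha)}-1$. In both cases we have $\ga(\alpha)\to 0$ (for $c_1=0$ this follows from $\ga(\alpha)/\alpha\to 0$ combined with $\alpha\to 0$), so $t_\alpha$ grows like $e^{C/\ga(\alpha)}$ with $C:=\log\frac{1}{1-\ep}>0$. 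Lemma~\ref{ldp-l2} gives, whenever $1+A_{\la,\alpha}>0$,
\[
\mathbb{P}\{P_1^{-1}-1>t_\alpha\}\leq e^{-\la(1-\alpha)t_\alpha}(1+A_{\la,\alpha})^{-1}.
\]

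For the case $c_1=0$ a fixed $\la\in(0,\delta)$ suffices. The integral defining $A_{\la,\alpha}$ yields the bound $A_{\la,\alpha}=O(\alpha)$, hence $\log(1+A_{\la,\alpha})=o(1)$ and the Chernoff bound reduces to $\log\mathbb{P}\leq -\la(1-\alpha)t_\alpha+o(1)$. To get \rf{ldp-e7} I would verify $\frac{\ga(\alpha)}{\alpha}e^{C/\ga(\alpha)}\to\infty$; taking logarithms this amounts to $C/\ga(\alpha)-\log(\alpha/\ga(\alpha))\to\infty$, which follows from the elementary estimate $\ga\log(1/\ga)\to 0$ since $\log(\alpha/\ga(\alpha))\leq\log(1/\ga(\alpha))$.

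For the case $c_1=\infty$ a constant $\la$ is generally inadequate: when $\ga(\alpha)$ tends to $0$ slowly (e.g.\ $\ga(\alpha)=1/\log\log(1/\alpha)$) the quantity $\log(\ga(\alpha)/\alpha)$ grows comparably to $\log(1/\alpha)$ whereas $t_\alpha$ is only a power of $\log(1/\alpha)$. The remedy is to let $\la=\la(\alpha)\to\infty$, say $\la(\alpha)=\tfrac12\log(1/\alpha)$. The asymptotic $A_{\la,\alpha}\sim -\alpha e^\la/\la$ for large $\la$ (derived from the integral definition) gives $A_{\la(\alpha),\alpha}=O(\sqrt\alpha/\log(1/\alpha))\to 0$, so $1+A_{\la(\alpha),\alpha}\geq\tfrac12$ eventually and the Chernoff bound (whose proof via Campbell's theorem in Lemma~\ref{ldp-l2} is valid throughout $\{1+A_{\la,\alpha}>0\}$, not merely for $\la<\delta$) produces $\log\mathbb{P}\leq -\tfrac12\log(1/\alpha)\, t_\alpha+O(1)$. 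Since $\ga(\alpha)\leq 1$ forces $\log(\ga(\alpha)/\alpha)\leq\log(1/\alpha)$, division yields the upper bound $-\tfrac12 t_\alpha+o(1)\to-\infty$, giving \rf{ldp-e6}. The main obstacle is precisely this second case: one must step outside the uniform range $\la<\delta$ of Lemma~\ref{ldp-l2} and carefully verify that the identity continues to hold for $\la(\alpha)$, which rests on the asymptotic $\la_\alpha\sim\log(1/\alpha)$ for the critical parameter implicit in that lemma.
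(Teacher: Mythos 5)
Your treatment of the case $c_1=0$ is essentially the paper's: a Chernoff bound at a fixed $\la\in(0,\delta)$ from Lemma~\ref{ldp-l2}, the observation that $1+A_{\la,\alpha}\to 1$, and the elementary verification that $\frac{\ga(\alpha)}{\alpha}\bigl[(1-\ep)^{-1/\ga(\alpha)}-1\bigr]\to\infty$ via $\ga\log(1/\ga)\to 0$; this part is correct and matches. For the case $c_1=\infty$ you take a genuinely different route. The paper abandons the exponential moment there: since $(1-\ep)^{1/\ga(\alpha)}\to 0$, the event $\{|P_1^{\ga(\alpha)}(\alpha,0)-1|>\ep\}$ is eventually contained in $\{P_1(\alpha,0)<1/k\}$ for every $k$, and the large deviation principle for $P_1(\alpha,0)$ with speed $\log(1/\alpha)$ from \cite{Feng09} gives $\limsup_{\alpha\to 0}\frac{1}{\log(1/\alpha)}\log\mathbb{P}\{P_1(\alpha,0)<1/k\}\le -(k-1)$; combining with $\log\frac{\ga(\alpha)}{\alpha}\le\log\frac{1}{\alpha}$ and letting $k\to\infty$ yields \rf{ldp-e6}. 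You instead keep the Chernoff bound but let the parameter grow, $\la(\alpha)=\frac12\log\frac1\alpha$. This works: Kingman's identity behind Lemma~\ref{ldp-l2} is valid for every $\la$ with $1+A_{\la,\alpha}>0$, and your estimate $A_{\la(\alpha),\alpha}\to 0$ is correct (the contribution to the integral near $z=1$ is $O(\alpha e^{\la(\alpha)})=O(\sqrt{\alpha})$ and the contribution near $z=0$ is smaller still), so $1+A_{\la(\alpha),\alpha}\ge\frac12$ eventually and the division by $\log\frac{\ga(\alpha)}{\alpha}\le\log\frac1\alpha$ goes through as you describe. Your version is self-contained and quantitatively stronger (it gives a bound of order $-(1-\ep)^{-1/\ga(\alpha)}$ after normalization, not merely $-\infty$), at the price of having to justify the moment identity outside the uniform range $\la<\delta$ and to estimate $\la_\alpha$; the paper's version is shorter but imports the external LDP of \cite{Feng09}. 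Both are valid proofs of the lemma.
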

\proof   Since the limit involves only small $\alpha$, we may assume that $0<\alpha<1/2$ and $0< \ep <1/2$. Let $\delta$ be as  in Lemma~\ref{ldp-l2} and set  $\delta_1=\delta/4$.  By direct calculation we obtain that
\beqn\label{ldp-e8}
\mathbb{P}\{|P^{\ga(\alpha)}_1(\alpha,0)-1|>\ep\}&=&\mathbb{P}\{P^{-1}_1(\alpha,0)-1\geq (1-\ep)^{-1/\ga(\alpha)}-1\}\nn\\
&\leq & \mathbb{E}[e^{\delta_1(P_1^{-1}(\alpha,0)-1)}]e^{-\delta_1[(1-\ep)^{-1/\ga(\alpha)}-1]}\\
&\leq & (1+A_{\delta_1, \alpha})^{-1}e^{-\delta_1[(1-\ep)^{-1/\ga(\alpha)}-1]}.\nn
\eeqn

It follows from \rf{ldp-e4} that
\be\label{ldp-e9}
\lim_{\alpha\ra 0} (1+A_{\delta_1, \alpha}) =1. 
\ee
 
 If $c_1=0$, then 
 \beqn\label{ldp-e10}
 &&\limsup_{\alpha\ra 0}\frac{(1-\ep)^{-1/\ga(\alpha)}-1}{\frac{\alpha}{\ga(\alpha)}}= \limsup_{\alpha\ra 0}\frac{(1-\ep)^{-1/\ga(\alpha)}}{\frac{\alpha}{\ga(\alpha)}}\nn\\
  &&\hspace{1cm} = \limsup_{\alpha \ra 0}\exp{\{\frac{1}{\ga(\alpha)}[\log\frac{1}{(1-\ep)}+\ga(\alpha)\log \ga(\alpha)-\frac{\ga(\alpha)}{\alpha} \alpha \log \alpha] \}}\\
 && \hspace{1cm}= \infty.  \nn
 \eeqn

Next assume that $c_1=\infty$ and \rf{ldp-e11} hold. For any $0<\ep<1/2$, $(1-\ep)^{1/\ga(\alpha)}$ converges to zero as $\alpha$ tends to zero. Hence for any $k\geq 1$, one can find $\alpha_k>0$ such that for all $0<\alpha<\alpha_k$
\[
\mathbb{P}\{|P_1^{\ga(\alpha)}(\alpha,0)-1|>\ep\}\leq \mathbb{P}\{P_1(\alpha,0)< \frac{1}{k}\}.
\]
By the large deviation principle for $P_1(\alpha,0)$ in \cite{Feng09}, we obtain that
\beq
\limsup_{\alpha \ra 0}\frac{1}{\log\frac{1}{\alpha}}\log \mathbb{P}\{|P_1^{\ga(\alpha)}(\alpha,0)-1|>\ep\}&\leq& \limsup_{\alpha \ra 0}\frac{1}{\log\frac{1}{\alpha}}\log \mathbb{P}\{P_1(\alpha,0)< \frac{1}{k}\}\\
&\leq& -(k-1).
\eeq
 
 Noting that $\ga(\alpha)<1$ and $k$ is arbitrary it follows that
 
 \beqn\label{ldp-e12}
 &&\limsup_{\alpha\ra 0}\frac{1}{\log\frac{\ga(\alpha)}{\alpha}}\log \mathbb{P}\{|P_1^{\ga(\alpha)}(\alpha,0)-1|>\ep\}\hspace{6cm}\nn\\
 &&\hspace{3.5cm}\leq \limsup_{\alpha\ra 0}\frac{1}{\log\frac{1}{\alpha}}\log \mathbb{P}\{|P_1^{\ga(\alpha)}(\alpha,0)-1|>\ep\}  \\
  &&\hspace{3.5cm}\leq\lim_{k\ra \infty }\limsup_{\alpha\ra 0}\frac{1}{\log\frac{1}{\alpha}}\log \mathbb{P}\{P_1(\alpha,0)\leq \frac{1}{k}\} \nn\\
 &&\hspace{3.5cm} = -\infty.\nn
 \eeqn 

 Putting together  \rf{ldp-e8}-\rf{ldp-e12}, we get \rf{ldp-e7} and \rf{ldp-e6}.
  
\hfill $\Box$

\begin{theorem}\label{ldp-t1}
Let $\ga(\alpha)$ satisfy \rf{scale-e1}, and set
\[
\nabla=\{{\bf x}=(x_1,x_2,\ldots): 1\geq x_1\geq x_2\geq\cdots\geq 0\}.
\]
 Then the followings hold as $\alpha$ converges to $0$.

{\rm (i)} If $c_1 =0$, then the family $\{{\bf P}^{\ga(\alpha)}(\alpha,0): 0<\alpha <1\}$ satisfies a large deviation principle on space $\nabla$ with speed $\frac{\alpha}{\ga(\alpha)}$ and rate function
\be\label{ldp-e2}
I_1({\bf x})=\left\{ \begin{array}{ll}
\sum_{n=1}^\infty n\log \frac{x_{n}}{x_{n+1}},& x_1=1, x_n>0 \ \mb{for all}\ n>1,\\
+\infty,&otherwise.
\end{array}
\right.
\ee
{\rm (ii)} If $c_1 =\infty$ and \rf{ldp-e11} holds, then the family $\{{\bf P}^{\ga(\alpha)}(\alpha,0): 0<\alpha <1\}$ satisfies a large deviation principle on space $\nabla$  with speed $\log\frac{\ga(\alpha)}{\alpha}$ and the rate function

\be\label{ldp-e1}
I_2({\bf x})=\left\{ \begin{array}{ll}
n-1,& x_1=1, x_n>0, x_k=0, k> n,\\
+\infty,& otherwise.
\end{array}
\right.
\ee
\end{theorem}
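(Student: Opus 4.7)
The plan is to deduce both LDPs from Lemma \ref{ldp-l1} by the contraction principle, after using Lemma \ref{ldp-l3} to pass from a continuous image of ${\bf R}^{\ga(\alpha)}$ to ${\bf P}^{\ga(\alpha)}(\alpha,0)$ via exponential equivalence. Define $\Phi:[0,1]^{\infty}\to\nabla$ by
\[
\Phi({\bf r})=\bigl(1,\,r_1,\,r_1r_2,\,r_1r_2r_3,\,\ldots\bigr).
\]
Each coordinate of $\Phi$ depends on only finitely many $r_k$, so $\Phi$ is continuous in the product topology, and its image lies in $\nabla$ since the coordinates form a non-increasing sequence in $[0,1]$.

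The first step is the exponential equivalence. Starting from $P_n(\alpha,0)=P_1(\alpha,0)\prod_{k=1}^{n-1}R_k$, one obtains
\[
P_n^{\ga(\alpha)}(\alpha,0)-\Phi_n({\bf R}^{\ga(\alpha)})=\bigl(P_1^{\ga(\alpha)}(\alpha,0)-1\bigr)\prod_{k=1}^{n-1}R_k^{\ga(\alpha)},
\]
so the product-topology distance between ${\bf P}^{\ga(\alpha)}(\alpha,0)$ and $\Phi({\bf R}^{\ga(\alpha)})$ in $\nabla$ is bounded uniformly in $n$ by $|P_1^{\ga(\alpha)}(\alpha,0)-1|$. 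Lemma \ref{ldp-l3} then provides super-exponential decay of $\mathbb{P}\{|P_1^{\ga(\alpha)}(\alpha,0)-1|>\ep\}$ at speed $\alpha/\ga(\alpha)$ when $c_1=0$, and at speed $\log(\ga(\alpha)/\alpha)$ when $c_1=\infty$ (using \rf{ldp-e11}), which is exactly the exponential-equivalence criterion in both regimes.

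The second step applies the contraction principle. Pushing the LDP for ${\bf R}^{\ga(\alpha)}$ forward through $\Phi$ yields an LDP on $\nabla$ with rate function $I_j({\bf x})=\inf\{J_j({\bf r}):\Phi({\bf r})={\bf x}\}$ for $j=1,2$, and by the first step the same LDP holds for ${\bf P}^{\ga(\alpha)}(\alpha,0)$. The evaluation of the infimum is a short case analysis: when $x_1\neq 1$ the fibre is empty and $I_j=+\infty$; when $x_1=1$ and every $x_n>0$, the fibre consists of the single point $r_n=x_{n+1}/x_n\in(0,1]$, giving $J_1({\bf r})=\sum_n n\log(x_n/x_{n+1})=I_1({\bf x})$ while $J_2=+\infty$; when $x_1=1$, $x_n>0$ and $x_k=0$ for $k>n$, any preimage is forced to have $r_j=x_{j+1}/x_j>0$ for $j<n$ and $r_n=0$, so $J_1=+\infty$, whereas $J_2$ is minimized to $n-1$ by taking the free coordinates $r_k=0$ for all $k>n$. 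These match \rf{ldp-e2} and \rf{ldp-e1}.

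The main subtlety is that the bound $|P_n^{\ga(\alpha)}-\Phi_n({\bf R}^{\ga(\alpha)})|\leq|P_1^{\ga(\alpha)}(\alpha,0)-1|$ holds \emph{uniformly} in $n$, so a single application of Lemma \ref{ldp-l3} controls all coordinates at once and delivers exponential equivalence at each prescribed speed. Once that is in place, the contraction computation above is routine and completes the proof.
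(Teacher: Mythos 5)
Your proposal is correct and follows essentially the same route as the paper: factor ${\bf P}^{\ga(\alpha)}(\alpha,0)=P_1^{\ga(\alpha)}(\alpha,0)\,\psi({\bf R}^{\ga(\alpha)})$ with $\psi(r_1,r_2,\ldots)=(1,r_1,r_1r_2,\ldots)$, use Lemma~\ref{ldp-l3} to get exponential equivalence at the relevant speed, and then apply the contraction principle to the LDP of Lemma~\ref{ldp-l1}, identifying $I_i({\bf x})=\inf\{J_i({\bf y}):\psi({\bf y})={\bf x}\}$. Your explicit uniform coordinate bound for the exponential equivalence and the case-by-case evaluation of the contraction infimum are details the paper leaves implicit, and both are carried out correctly.
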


\proof Writing ${\bf P}^{\ga(\alpha)}$ in terms of ${\bf R}^{\ga(\alpha)}$ we have
\[
{\bf P}^{\ga(\alpha)}= P_1^{\ga(\alpha)}(\alpha,0)(1, R^{\ga(\alpha)}_1,R^{\ga(\alpha)}_1 R^{\ga(\alpha)}_2, \ldots).
\] 

By Lemma~\ref{ldp-l3}, $P_1^{\ga(\alpha)}(\alpha,0)$ is exponentially equivalent to one. Hence by lemma 2.1 in \cite{FengGao08} 
 $(1, R^{\ga(\alpha)}_1,R^{\ga(\alpha)}_1 R^{\ga(\alpha)}_2, \ldots)$ and $ {\bf P}^{\ga(\alpha)}$ have the same large deviation principle. Define
\[
\psi: [0,1]^{\infty}\lra \nabla,  \ \  (x_1,x_2,\ldots)\ra (1, x_1,x_1x_2, \ldots). 
\]
Then $\psi$ is clearly continuous and $(1, R^{\ga(\alpha)}_1,R^{\ga(\alpha)}_1 R^{\ga(\alpha)}_2, \ldots)= \psi({\bf R}^{\ga(\alpha)})$.  Noting that
\[
I_i({\bf x})= \inf\{J_i({\bf y}): \psi({\bf y})={\bf x}\}, i=1,2,
\]
the theorem follows from Lemma~\ref{ldp-l1} and the contraction principle.

\hfill $\Box$


\section{Asymptotic Behaviour of $\Xi_{\alpha,0,\nu}$}

Recall that the REM has configuration space $S_{N}=\{-1,1\}^{N}$ and the Hamiltonian given by  a family of $i.i.d.$ normal random variables with mean $0$ and variance $N$
$$\{H_{N}(\sigma)\mid \sigma\in S_{N}\}.$$
 The Gibbs measure $G_{N}(\sigma)$ at temperature $T$ is given by
 $$
 Z_{N}^{-1}\exp\{-\beta H_{N}(\sigma)\},
 $$
 where $\beta=1/T$ and $Z_{N}=\sum_{\sigma\in S_{N}}\exp\{-\beta H_{N}(\sigma)\}.$ By making the change of variable
 $$
 r_{N}(\sigma)=1-\sum_{i=1}^{N}(1-\sigma_{i})2^{-i-1},
 $$
we can regard $[0,1]$ as the new configuration space.  The corresponding Gibbs measure has the form
$$
\mu_{N}^{T}(d\,x)=\sum_{\sigma\in S_{N}}\delta_{r_{N}}(d\,x)G_{N}(\sigma).
$$
 As $N\to\infty$, the limiting Gibbs measure $\mu^{T}=\lim_{N\to\infty}\mu_{N}^{T}$ exhibits phase transition at the critical temperature $T_{c}=\sqrt{2\log2}$. More specifically, by Theorems 9.3.1 and 9.3.4 in \cite{Bov06}, we have
$$
\mu^{T}=\begin{cases}
\nu,& \mbox{ if}\ T\geq T_{c}\\
\Xi_{\alpha,0,\nu}, & \mbox{ if } T<T_{c}.
\end{cases}
$$ 

Thus a phase transition occurs when the temperature crosses the critical value between high temperature and low temperature regimes. The low temperature regime has a rich structure.  The transition from the low temperature regime to the critical temperature regime corresponds to $\alpha$ tending to one from below. The goal of this section is to understand the microscopic behaviour of this transition through the establishment of a large deviation principle for $\Xi_{\alpha,0,\nu}$.

\subsection{Estimates for Stable Subordinator}

Recall that $\rho_t$ be the stable subordinator with index $0<\alpha<1$. For $t=1$, the following holds.

\begin{lemma}{\rm (\cite{Pollard46}, \cite{Kan75})}
 The distribution function of $\rho_{1}^{\frac{\alpha}{1-\alpha}}$ has two integral representations:
 \begin{equation}
 F(x)=\mathbb{P}\{\rho_{1}^{\frac{\alpha}{1-\alpha}}\leq x\}=\frac{1}{\pi}\int_{0}^{\pi}e^{-\frac{A(u)}{x}}du,\label{first}
 \end{equation}
 where $A(u)$ is the Zolotarev's function defined as
 $$
 A(u)=\left\{\frac{\sin^{\alpha}(\alpha u)\sin^{1-\alpha}((1-\alpha)u)}{\sin u}\right\}^{\frac{1}{1-\alpha}}.
 $$
 The distribution function of $\rho_{1}$ is thus
 $F(x^{\frac{\alpha}{1-\alpha}}).
 $
 The density function of $\rho_{1}$ has the following representation
 \begin{equation}
 \phi_{\alpha}(t)=\frac{1}{\pi}\int_{0}^{\infty}e^{-tu}e^{-u^{\alpha}\cos \pi\alpha}\sin (u^{\alpha}\sin \pi\alpha)du\label{second}
 \end{equation}
\end{lemma}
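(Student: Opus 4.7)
The plan is to derive both representations from the Laplace transform $\mathbb{E}[e^{-\lambda\rho_1}]=e^{-\lambda^\alpha}$, which follows from the L\'evy-Khintchine formula applied to the stable L\'evy measure $\Lambda_\alpha$ recalled in Section 2. For the density formula \rf{second}, I would apply the Bromwich inversion
\[
\phi_\alpha(t)=\frac{1}{2\pi i}\int_{c-i\infty}^{c+i\infty}e^{zt-z^\alpha}\,dz,\qquad c>0,
\]
and deform the vertical line to a Hankel contour wrapping around the negative real axis, which is the branch cut of $z\mapsto z^\alpha$. The large-arc contributions vanish because $\mathrm{Re}(z^\alpha)=|z|^\alpha\cos(\alpha\arg z)$ stays bounded below while $e^{zt}$ decays for $t>0$. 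On the two lips of the cut we set $z=ue^{\pm i\pi}$, so $z^\alpha=u^\alpha\cos\pi\alpha\pm iu^\alpha\sin\pi\alpha$; the real parts cancel, the imaginary parts add, and dividing by $2\pi i$ recovers \rf{second}.

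For the distribution function \rf{first} my plan is to rewrite \rf{second} compactly as $\phi_\alpha(t)=\frac{1}{\pi}\,\mathrm{Im}\int_0^\infty e^{-tu-u^\alpha e^{-i\pi\alpha}}\,du$, integrate in $t$ from $0$ to $t_0=x^{(1-\alpha)/\alpha}$, and interchange the order of integration to obtain
\[
F(x)=\frac{1}{\pi}\,\mathrm{Im}\int_0^\infty \frac{1-e^{-t_0 u}}{u}\,e^{-u^\alpha e^{-i\pi\alpha}}\,du.
\]
I would then implement Zolotarev's contour rotation: deform the $u$-ray onto $u=re^{i\theta(v)}$ with $\theta(v)$ chosen so that the phase $\mathrm{Im}(t_0 u+u^\alpha e^{-i\pi\alpha})$ is stationary along the rotated ray, and parameterize by $v\in[0,\pi]$. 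The stationarity condition, together with elementary trigonometric manipulation, produces the ratios $\sin(\alpha v)$, $\sin((1-\alpha)v)$ and $\sin v$, and the stationary value of the exponent collapses to precisely $-A(v)/x$. The endpoints behave correctly because $A(0)=\alpha^{\alpha/(1-\alpha)}(1-\alpha)$ is finite while $A(\pi)=+\infty$ (the denominator $\sin v$ vanishes), so $e^{-A(v)/x}\to 0$ at $v=\pi$ and the integral over $[0,\pi]$ converges.

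The main obstacle is the trigonometric collapse itself: verifying that the stationary-phase rotation reduces the two-term complex exponent to the single real value $-A(v)/x$ with the precise sine-ratio structure claimed. This is the heart of the Pollard--Zolotarev calculation; the surrounding steps---justifying the Bromwich-to-Hankel deformation, Fubini for the interchange of integration orders, and the reparameterization $\rho_1\leq x^{(1-\alpha)/\alpha}$ versus $\rho_1^{\alpha/(1-\alpha)}\leq x$---are essentially routine once the identity for $A(v)$ is in hand. A useful cross-check is that differentiating \rf{first} in $x$ and applying the inverse substitution must reproduce \rf{second}, which pins down the normalization and the factor of $1/\pi$.
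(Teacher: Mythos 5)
The paper does not prove this lemma: both representations are quoted from Pollard \cite{Pollard46} and Kanter \cite{Kan75}, so there is no internal argument to compare yours against. Judged on its own terms, your derivation of the density formula \rf{second} is the standard Pollard argument and is essentially correct: the Laplace transform $\mathbb{E}[e^{-\la\rho_1}]=e^{-\la^{\alpha}}$ does follow from the L\'evy--Khintchine formula, since the normalization $\frac{\alpha}{\Gamma(1-\alpha)}$ in $\Lambda_\alpha$ is chosen exactly so that $\int_0^\infty(1-e^{-\la x})\Lambda_\alpha(dx)=\la^{\alpha}$, and collapsing the Bromwich contour onto the two lips of the cut gives $e^{-u^{\alpha}e^{-i\pi\alpha}}-e^{-u^{\alpha}e^{i\pi\alpha}}=2i\,e^{-u^{\alpha}\cos\pi\alpha}\sin(u^{\alpha}\sin\pi\alpha)$, hence \rf{second} after dividing by $2\pi i$. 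The one imprecision is your justification of the vanishing arcs: $\cos(\alpha\arg z)$ is negative for $\alpha>1/2$ near $\arg z=\pm\pi$, so $e^{-z^{\alpha}}$ can grow like $e^{|z|^{\alpha}}$ there; the correct statement is that this growth is subexponential because $\alpha<1$ and is dominated by $e^{t\,\mathrm{Re}\,z}$, with a Jordan-lemma estimate handling the portion of the arc near the imaginary axis.

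The genuine gap is in the first representation \rf{first}. You have correctly located where the work lies --- the rotation of the ray of integration that turns $\mathrm{Im}\int_0^\infty u^{-1}(1-e^{-t_0u})e^{-u^{\alpha}e^{-i\pi\alpha}}du$ into $\int_0^{\pi}e^{-A(v)/x}dv$ --- but you do not carry it out, and that trigonometric identity is the entire content of the Pollard--Zolotarev computation; everything else in your outline is bookkeeping. As written, the proposal proves \rf{second} and merely asserts \rf{first}. If you want to close the gap without the stationary-phase calculation, invoke Kanter's representation directly: $\rho_1\eqdist(A(U)/W)^{(1-\alpha)/\alpha}$ with $U$ uniform on $(0,\pi)$ independent of a standard exponential $W$. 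Conditioning on $U$ then gives $\mathbb{P}\{\rho_1^{\alpha/(1-\alpha)}\le x\}=\mathbb{E}\bigl[\mathbb{P}\{W\ge A(U)/x\mid U\}\bigr]=\frac{1}{\pi}\int_0^{\pi}e^{-A(u)/x}du$ in two lines --- though of course that representation is itself established by exactly the computation you deferred, so this only relocates the citation, it does not eliminate it.
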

Applying these representations, we obtain the following estimations. 

\begin{theorem}\label{fgz-t1}
For any given $1>\delta>0$, we have
\be\label{fgz-1}
\lim_{\alpha\to1}(1-\alpha)\log\log\frac{1}{\mathbb{P}\{\rho_{1}<1-\delta\}}=\lim_{\alpha\to1}(1-\alpha)\log\log\frac{1}{\mathbb{P}\{\rho_{1}\leq 1-\delta\}}=\log\frac{1}{1-\delta}
\ee
and
\be\label{fgz-2}
\lim_{\alpha\to1}\frac{1}{\log\frac{1}{1-\alpha}}\log \mathbb{P}\{\rho_{1}>1+\delta\}=\lim_{\alpha\to1}\frac{1}{\log\frac{1}{1-\alpha}}\log \mathbb{P}\{\rho_{1}\geq1+\delta\}=-1.
\ee
\end{theorem}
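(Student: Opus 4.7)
\textbf{Lower tail \rf{fgz-1}.} I separate the four one-sided estimates in \rf{fgz-1}--\rf{fgz-2} into pairs and, for each limit, match a Chernoff or L\'evy-measure bound on the probability with a bound from the integral representation \rf{first} or \rf{second}. Since $\rho_{1}$ has a density the strict and non-strict versions of each event coincide, so it suffices to treat $\{\rho_{1}\leq 1-\delta\}$ and $\{\rho_{1}>1+\delta\}$. For \rf{fgz-1}, since $\mathbb{E}[e^{-\lambda\rho_{1}}]=e^{-\lambda^{\alpha}}$, Chernoff with the optimal choice $\lambda = [\alpha/(1-\delta)]^{1/(1-\alpha)}$ gives
$$\mathbb{P}\{\rho_{1}\leq 1-\delta\}\leq \exp\Bigl\{-(1-\alpha)\Bigl(\tfrac{\alpha}{1-\delta}\Bigr)^{\alpha/(1-\alpha)}\Bigr\};$$
taking $\log\log(1/\cdot)$, multiplying by $(1-\alpha)$, and using $(1-\alpha)\log(1-\alpha)\to 0$ gives the bound $\geq \log[1/(1-\delta)]$. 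For the matching direction I apply \rf{first} with $x=(1-\delta)^{\alpha/(1-\alpha)}$: the Zolotarev function $A(u)$ is continuous on $(0,\pi)$ with $\lim_{u\to 0^{+}}A(u)=A_{0}:=\alpha^{\alpha/(1-\alpha)}(1-\alpha)$, so fixing $\epsilon>0$ and choosing $u_{0}>0$ with $A(u)\leq A_{0}+\epsilon$ on $(0,u_{0})$ yields $\mathbb{P}\{\rho_{1}\leq 1-\delta\}\geq (u_{0}/\pi)\exp\{-(A_{0}+\epsilon)(1-\delta)^{-\alpha/(1-\alpha)}\}$. The identity $(1-\alpha)\log A_{0}=\alpha\log\alpha+(1-\alpha)\log(1-\alpha)\to 0$ then renders the prefactor negligible and produces the matching upper bound $\leq\log[1/(1-\delta)]$.

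\textbf{Upper tail \rf{fgz-2}.} The lower bound on the probability is immediate from the L\'evy-It\^o structure: the jumps of $\rho$ of size exceeding $1+\delta$ in $[0,1]$ form a Poisson random measure with mean $\Lambda_{\alpha}(1+\delta,\infty)=(1+\delta)^{-\alpha}/\Gamma(1-\alpha)$, and any such jump forces $\rho_{1}>1+\delta$; hence
$$\mathbb{P}\{\rho_{1}>1+\delta\}\geq 1-\exp\bigl\{-(1+\delta)^{-\alpha}/\Gamma(1-\alpha)\bigr\}.$$
Because $\Gamma(1-\alpha)\sim(1-\alpha)^{-1}$ as $\alpha\to 1$, the right side is asymptotic to $(1-\alpha)(1+\delta)^{-1}$, giving $\liminf_{\alpha\to 1}[\log\mathbb{P}]/\log[1/(1-\alpha)]\geq -1$. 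For the upper bound I use the density representation \rf{second}: the inequalities $|\sin x|\leq x$ and $-\cos\pi\alpha\leq 1$ give $\phi_{\alpha}(t)\leq(\sin\pi\alpha/\pi)\int_{0}^{\infty}u^{\alpha}e^{-tu+u^{\alpha}}du$, and Fubini yields
$$\mathbb{P}\{\rho_{1}>1+\delta\}\leq \frac{\sin\pi\alpha}{\pi}\int_{0}^{\infty}u^{\alpha-1}e^{u^{\alpha}-(1+\delta)u}du.$$
Splitting at $u=1$ and using $u^{\alpha}\leq 1$ for $u\leq 1$ and $u^{\alpha}\leq u$ for $u\geq 1$ shows the integral is uniformly bounded in $\alpha$ (and in fact converges to $1/\delta$), while $\sin\pi\alpha\sim\pi(1-\alpha)$. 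Taking log and dividing by $\log[1/(1-\alpha)]$ gives $\limsup\leq -1$.

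\textbf{Main obstacle.} The delicate step is the prefactor control in the \rf{first}-based lower bound on $\mathbb{P}\{\rho_{1}\leq 1-\delta\}$: both $\log(\pi/u_{0})$ and $\log(A_{0}+\epsilon)$ must be absorbed after the double logarithm and the $(1-\alpha)$-scaling. The first is a fixed constant and hence harmless, but the second hinges on the non-obvious asymptotic $(1-\alpha)\log A_{0}\to 0$ (one computes $A_{0}\sim e^{-1}(1-\alpha)$), i.e., $A_{0}$ decays no faster than polynomially in $(1-\alpha)$. For \rf{fgz-2} the parallel obstacle, the uniform-in-$\alpha$ boundedness of $\int_{0}^{\infty}u^{\alpha-1}e^{u^{\alpha}-(1+\delta)u}du$, is routine once one splits the range at $u=1$ as above.
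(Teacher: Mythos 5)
Your strategy is sound, all four one-sided estimates come out with the right constants, and half of your argument takes a genuinely different route from the paper. For the upper tail \rf{fgz-2} the paper extracts both bounds from the density representation \rf{second}, whereas you obtain the lower bound on $\mathbb{P}\{\rho_{1}>1+\delta\}$ from the Poisson count of jumps of size exceeding $1+\delta$; this is cleaner, since the paper's integrand involves $\sin(u^{\alpha}\sin\pi\alpha)$, which changes sign, so a lower bound extracted from that integral actually needs more care than the text admits. Your upper bound via \rf{second}, Fubini and splitting at $u=1$ is essentially the paper's. For the lower tail, your Chernoff bound $\exp\{-(1-\alpha)(\alpha/(1-\delta))^{\alpha/(1-\alpha)}\}$ replaces the paper's use of the monotonicity of the Zolotarev function $A$ in $u$ together with $A(0^{+})=(1-\alpha)\alpha^{\alpha/(1-\alpha)}$; the two bounds in fact coincide exactly, and the Chernoff route avoids \rf{first} entirely for that direction.

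The one step you must tighten is the lower bound on $\mathbb{P}\{\rho_{1}\leq 1-\delta\}$ from \rf{first}. The function $A=A_{\alpha}$ depends on $\alpha$, so the $u_{0}$ you choose with $A_{\alpha}(u)\leq A_{0}+\epsilon$ on $(0,u_{0})$ is a priori $u_{0}(\alpha)$, and your assertion that $\log(\pi/u_{0})$ is ``a fixed constant'' is not justified as written: you need either a $u_{0}$ valid uniformly for $\alpha$ near $1$ or a quantitative lower bound on $u_{0}(\alpha)$. The cheapest repair is the paper's: show that $A_{\alpha}(\cdot)$ is nondecreasing on $(0,\pi)$ (differentiate $\log\frac{\sin(vu)}{\sin u}$ in $u$), fix $u_{0}=\pi/2$, bound $A_{\alpha}(u)\leq A_{\alpha}(\pi/2)$ there, and check $(1-\alpha)\log A_{\alpha}(\pi/2)\to 0$ (indeed $A_{\alpha}(\pi/2)=O(1-\alpha)$). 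Alternatively, observe that the tolerance is enormous: after the double logarithm any $u_{0}(\alpha)\geq\exp\{-C(1-\delta)^{-\alpha/(1-\alpha)}\}$ suffices. A minor further point: since $A_{0}\to 0$ while $\epsilon$ is fixed, the surviving prefactor is $A_{0}+\epsilon\approx\epsilon$, so the binding computation is the trivial $(1-\alpha)\log\epsilon\to 0$; the asymptotic $(1-\alpha)\log A_{0}\to 0$ that you highlight as the main obstacle would only be needed had you arranged the multiplicative bound $A_{\alpha}(u)\leq(1+\epsilon)A_{0}$.
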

\begin{proof}
For any $u\in (0,\pi), v\in (0,1)$, one has 
\beq
\frac{d [v \cot(v u)-\cot u]}{dv}&=&\frac{1}{2\sin^{2}(v u)}(\sin(v u)-2v u)\\
&\leq& \frac{1}{2\sin^{2}(v u)}(\sin(v u)-v u)\leq 0\eeq
which implies that
$$ 
 \frac{d\log\frac{\sin(v u)}{\sin u}}{du}=v\cot(v u)-\cot u\geq 0.
 $$
 Hence 
 $$
 A(u)=\exp\{\alpha\log\frac{\sin(\alpha u)}{\sin u}+(1-\alpha)\log\frac{\sin ((1-\alpha)u)}{\sin u}\}
$$ 

\noindent is nondecreasing in $u$. Further more it follows from direct calculation that 
\[
\lim_{u\to0}A(u)=(1-\alpha)\alpha^{\frac{\alpha}{1-\alpha}}\quad \lim_{u\to\pi}A(u)=\infty.
\]

\noindent Therefore, applying the representation \rf{first} we get that for any $\epsilon>0$
\beq
&&\frac{\pi-\epsilon}{\pi}\exp\left\{-\frac{A(\pi-\epsilon)}{(1-\delta)^{\frac{\alpha}{1-\alpha}}}\right\}\\
&&\leq\frac{1}{\pi}\int_{0}^{\pi-\epsilon}e^{-\frac{A(u)}{(1-\delta)^{\frac{\alpha}{1-\alpha}}}}du
=\mathbb{P}\{\rho_1 \leq 1-\delta\}\\
&&\leq \exp\left\{-\frac{A(0)}{(1-\delta)^{\frac{\alpha}{1-\alpha}}}\right\}.\eeq
This implies that 
\beq
\log\frac{1}{1-\delta}&\leq&\liminf_{\alpha\to1}(1-\alpha)\log\log\frac{1}{\mathbb{P}\{\rho_{1}\leq 1-\delta\}}\\
&=& \liminf_{\alpha\to1}(1-\alpha)\log\log\frac{1}{\mathbb{P}\{\rho_{1}<1-\delta\}}
\eeq
\beq
&&\limsup_{\alpha\to1}(1-\alpha)\log\log\frac{1}{\mathbb{P}\{\rho_{1}<1-\delta\}}\\
&&=\limsup_{\alpha\to1}(1-\alpha)\log\log\frac{1}{\mathbb{P}\{\rho_{1}\leq 1-\delta\}}\\
&& \leq \log(\frac{1}{1-\delta})
\eeq
and thus \rf{fgz-1} holds.

\noindent To prove \rf{fgz-2}, we apply  (\ref{second}) and get
\begin{align*}
\mathbb{P}\{\rho_{1}>1+\delta\}=&\mathbb{P}\{\rho_{1}\geq1+\delta\}\\
=&\frac{1}{\pi}\int_{1+\delta}^{\infty}\int_{0}^{\infty}u^{-1}e^{-(1+\delta)u}e^{-u^{\alpha}\cos\pi\alpha}\sin(u^{\alpha}\sin\pi\alpha)du dt\\
=&\frac{\sin\pi\alpha}{\pi}\int_{0}^{\infty}u^{-(1-\alpha)}e^{-\delta u} \left[e^{-u-u^{\alpha}\cos\pi\alpha}\frac{\sin(u^{\alpha}\sin\pi\alpha)}{u^{\alpha}\sin\pi\alpha}\right]du
\end{align*}

\noindent Noting that
$
\frac{\sin(u^{\alpha}\sin\pi\alpha)}{u^{\alpha}\sin\pi\alpha}$ is bounded and 
$$
\lim_{\alpha \ra 1}\frac{\sin \pi\alpha}{\pi(1-\alpha)}
=1,$$
it follows that \rf{fgz-2} holds.

\end{proof}

\begin{theorem}\label{fgz-t2}
The family $\{\rho_1: 0<\alpha<1\}$ satisfies a large deviation principle on $(0,\infty)$ as $\alpha$ tends to one  with speed $-\log(1-\alpha)$  and rate function {\rm (not good in this case)}
\be\label{fgz-3}
J(x)=\left\{ \begin{array}{ll}
1,& x>1,\\
0,& x=1\\
+\infty,& otherwise.
\end{array}
\right.
\ee
\end{theorem}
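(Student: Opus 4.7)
My strategy is to prove the upper bound on closed sets and the lower bound on open sets separately. Because $J$ takes only the three values $0$, $1$, and $+\infty$, each bound reduces to elementary casework on whether the set contains $1$, meets $(1,\infty)$, or lies in $(0,1)$. The tail estimates of Theorem \ref{fgz-t1} do almost all of the work; the only place where more is required is producing a matching lower bound on a short interval just above $1$, which I would extract from the density representation \rf{second} together with dominated convergence.

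For the upper bound, let $F \subset (0,\infty)$ be closed. If $1 \in F$ then $\inf_F J = 0$ and there is nothing to show. Otherwise pick $\delta > 0$ with $(1-\delta, 1+\delta) \cap F = \emptyset$, so that
\[
\mathbb{P}\{\rho_1 \in F\} \leq \mathbb{P}\{\rho_1 \leq 1-\delta\} + \mathbb{P}\{\rho_1 \geq 1+\delta\}.
\]
The first summand decays doubly exponentially in $1/(1-\alpha)$ by \rf{fgz-1} and so contributes $-\infty$ to $\limsup \frac{1}{-\log(1-\alpha)}\log(\cdot)$, while the second is $(1-\alpha)^{1+o(1)}$ by \rf{fgz-2} and contributes $-1$. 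Hence the limsup equals $-1 = -\inf_F J$ when $F$ meets $(1,\infty)$, and $-\infty = -\inf_F J$ when $F \subset (0, 1-\delta]$.

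For the lower bound, let $G \subset (0,\infty)$ be open. If $1 \in G$ then $\inf_G J = 0$, and since Theorem \ref{fgz-t1} forces $\rho_1 \to 1$ in probability, $\mathbb{P}\{\rho_1 \in G\} \to 1$ and the liminf is $0$. The case $G \cap [1,\infty) = \emptyset$ is vacuous. The substantive case is $1 \notin G$ but $G \cap (1,\infty) \neq \emptyset$: choose $0 < \delta_1 < \delta_2$ with $[1+\delta_1, 1+\delta_2] \subset G$; applying Fubini to \rf{second} yields
\[
\mathbb{P}\{\rho_1 \in (1+\delta_1, 1+\delta_2)\} = \frac{\sin\pi\alpha}{\pi}\int_0^\infty u^{-(1-\alpha)}(e^{-\delta_1 u} - e^{-\delta_2 u}) e^{-u - u^\alpha \cos\pi\alpha}\frac{\sin(u^\alpha\sin\pi\alpha)}{u^\alpha\sin\pi\alpha}\,du.
\]
The integrand tends pointwise to $e^{-\delta_1 u} - e^{-\delta_2 u}$, and for $\alpha \in [1/2, 1)$ it is dominated by an integrable function: $e^{-u - u^\alpha\cos\pi\alpha} \leq e^{-u + u^\alpha} \leq 1$ on $(1,\infty)$ and $\leq e$ on $(0,1]$ (using $u^\alpha \leq u$ for $u \geq 1$ and $u^\alpha \leq 1$ for $u \leq 1$), while $u^{-(1-\alpha)} \leq u^{-1/2}$ on $(0,1]$ and $\leq 1$ on $(1,\infty)$, and $|\sin y / y| \leq 1$. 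Dominated convergence combined with $\sin\pi\alpha/\pi \sim (1-\alpha)$ gives $\mathbb{P}\{\rho_1 \in (1+\delta_1, 1+\delta_2)\} \sim (1-\alpha)(\delta_1^{-1} - \delta_2^{-1})$, whence $\liminf \frac{1}{-\log(1-\alpha)}\log \mathbb{P}\{\rho_1 \in G\} \geq -1 = -\inf_G J$.

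Non-goodness of $J$ is immediate: the sublevel set $\{x: J(x) \leq 1\} = [1,\infty)$ is unbounded in $(0,\infty)$. The one technical point is the dominated-convergence step in the lower bound; the main obstacle there is obtaining a uniform integrable majorant for the integrand across $\alpha$ near $1$, which relies on the separate bounds for $e^{-u - u^\alpha\cos\pi\alpha}$ on $(0,1]$ and $(1,\infty)$ and on the inequality $u^\alpha \leq u$ for $u \geq 1$.
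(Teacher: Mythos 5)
Your proof is correct, and its overall architecture --- casework on whether the set contains $1$, meets $(1,\infty)$, or lies in $(0,1)$, resolved by the tail estimates \rf{fgz-1} and \rf{fgz-2} --- is the same as the paper's; the upper bound and the non-goodness observation match the paper essentially verbatim. The genuine difference is in the lower bound. The paper splits open sets into ``$B$ intersects $[0,1)$'' (dismissed as trivial) and ``$B$ does not intersect $[0,1)$'', and in the second case minorizes $\mathbb{P}\{\rho_{1}\in(a,b)\}$ by $\frac{b-a}{\pi}\int_{0}^{\infty}u^{-1}e^{-bu}e^{-u^{\alpha}\cos\pi\alpha}\sin(u^{\alpha}\sin\pi\alpha)\,du$, a pointwise minorization of an integrand that changes sign; moreover the first case, as stated, silently absorbs sets such as $(1/2,3/4)\cup(2,3)$, for which the required bound is $-1$ rather than $-\infty$, and sets containing $1$, for which one still needs $\rho_{1}\to1$ in probability. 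Your enumeration ($1\in G$; $G\cap[1,\infty)=\emptyset$; $1\notin G$ but $G$ meets $(1,\infty)$) is the correct exhaustive one, and your treatment of the substantive case --- the exact Fubini identity for $\mathbb{P}\{\rho_{1}\in(1+\delta_{1},1+\delta_{2})\}$ (which is just the difference of two instances of the paper's own computation in the proof of \rf{fgz-2}) followed by dominated convergence with the majorant you exhibit --- yields the sharp asymptotic $(1-\alpha)(\delta_{1}^{-1}-\delta_{2}^{-1})$ and is, if anything, more careful than the paper's argument. What the paper's route buys is brevity; what yours buys is a clean, sign-issue-free justification of the lower bound and a precise constant.
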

\proof  Let $A$ be a closed set in $(0,\infty)$. If $A$ contains $1$, then $\inf_{x\in A}J(x)=0$ and the upper estimate holds. If $A$ does not contain $1$, then one can find $0<a<1<b$ such that $A$ is either a subset of $(0,a]$, a subset of $[b,\infty)$ or a subset or $(0,a]\cup [b,\infty)$.  For each case we can apply Theorem\ref{fgz-t1} to obtain the upper estimate.

\noindent The proof for lower estimates goes as follows. Let $B$ be any open set. If $B$ intersects with $[0,1)$, then the lower estimates are trivial. If $B$ does not intersect with $[0,1)$, then $B$ can not contain $1$. Hence one can find $1<a<b<\infty$ such that $(a,b)\subset B$ and
\beq
\mathbb{P}\{\rho_{1}\in B\}&\geq &\mathbb{P}\{\rho_{1}\in (a,b)\}\\
&\geq & \frac{b-a}{\pi}\int_{0}^{\infty}u^{-1}e^{-bu}e^{-u^{\alpha}\cos\pi\alpha}\sin(u^{\alpha}\sin\pi\alpha)du
\eeq
which implies that
  
\[
\liminf_{\alpha\ra 1}\frac{1}{-\log(1-\alpha)}\log \mathbb{P}\{\rho_{1}\in B\}\geq-1 =-inf_{x \in B}J(x)
 \]
\hfill $\Box$

For any $n \geq 1$, let $\tau_1, \ldots, \tau_{n+1}$ be independent copies of $\rho_1$. Set
\[
\sigma_i=\frac{\tau_i}{\tau_1}, \ i=2, \ldots,n+1.
\]
Set
\[
\tilde{\sigma}_n =\min\{\sigma_i:2\leq i\leq n+1\}\] 
and let $r_n$ denote the frequency of $\tilde{\sigma}_n$ among $\{\sigma_i\}_{i=2,\ldots,n+1}$. Define

\be\label{fgz-4}
J_n(u_1,\ldots, u_n)=\left\{ \begin{array}{ll}
n+1-r_n,& \tilde{\sigma}_n <1,\\
n-r_n,& \tilde{\sigma}_n=1,\\
n, &\tilde{\sigma}_n >1.
\end{array}
\right.
\ee
Clearly $J_n(\cdot)$ is a rate function on $(0,\infty)^n$.

\begin{theorem}\label{fgz-t3}
The family $\{(\sigma_2,\ldots,\sigma_{n+1}): 0<\alpha<1\}$ satisfies a large deviation principle on $(0,\infty)^n$ with speed $-\log(1-\alpha)$  and rate function $J_n(\cdot)$ as $\alpha$ tends to one.
\end{theorem}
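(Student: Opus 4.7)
The strategy is to lift Theorem~\ref{fgz-t2} to a joint large deviation principle for the independent tuple $(\tau_1,\dots,\tau_{n+1})$ on $(0,\infty)^{n+1}$ and then push it forward by the continuous map
\[
\Phi(t_1,\dots,t_{n+1}) = \bigl(t_2/t_1,\dots,t_{n+1}/t_1\bigr).
\]
First I would establish that, because the $\tau_i$ are i.i.d.\ and we are in finite dimensions, the joint LDP on $(0,\infty)^{n+1}$ holds with speed $-\log(1-\alpha)$ and the additive rate function $\tilde J(t_1,\dots,t_{n+1}) = \sum_{i=1}^{n+1} J(t_i)$, where $J$ is the rate function in \rf{fgz-3}. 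The upper bound on products of intervals follows from independence and Theorem~\ref{fgz-t2}, and the extension to general closed sets is by the standard finite cover. The non-goodness of $J$ only affects the direction $t_i\to\infty$, and this will be controlled by the polynomial tail in \rf{fgz-2} when we later restrict attention to bounded events in the image space.

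Second, I would apply the contraction principle to the continuous map $\Phi$ to conclude that $(\sigma_2,\dots,\sigma_{n+1})$ satisfies an LDP with speed $-\log(1-\alpha)$ and rate function
\[
\bar J_n(u_2,\dots,u_{n+1}) \;=\; \inf_{t_1>0}\Bigl\{\, J(t_1) \;+\; \sum_{i=2}^{n+1} J(t_1 u_i)\,\Bigr\}.
\]
Because $J(y)=0$ at $y=1$, $J(y)=1$ for $y>1$, and $J(y)=+\infty$ for $y<1$, the infimum is finite precisely when $t_1\geq \max\bigl(1,1/\tilde u_n\bigr)$, with $\tilde u_n=\min_{2\leq i\leq n+1} u_i$.

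Third, I would identify $\bar J_n$ with $J_n$ in \rf{fgz-4} by case analysis on $\tilde u_n$. If $\tilde u_n>1$, the optimizer is $t_1=1$ and the value is $n$. If $\tilde u_n=1$, again $t_1=1$ is optimal and the sum equals the number of coordinates strictly exceeding $1$, namely $n-r_n$. If $\tilde u_n<1$, the constraint forces $t_1\geq 1/\tilde u_n>1$; the choice $t_1=1/\tilde u_n$ contributes $1$ from $J(t_1)$, zero from each of the $r_n$ minimizing coordinates, and $1$ from each of the remaining $n-r_n$ coordinates, yielding $n+1-r_n$. Any larger $t_1$ only increases the sum.

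The main obstacle is the first step: Theorem~\ref{fgz-t2} explicitly notes that $J$ is \emph{not} a good rate function, so the classical contraction principle does not apply off the shelf, and joint exponential tightness of $(\tau_1,\dots,\tau_{n+1})$ fails on the side $t_i\to\infty$. I expect the fix to be a truncation argument: intersect with $\{\tau_1\leq M\}$ and $\{\tau_i\leq M\}$ for large $M$, apply the contraction principle on the compact subregion (where $\tilde J$ is good), and then use \rf{fgz-2} to show that the complement contributes at rate tending to $+\infty$ as $M\to\infty$, so it does not affect the LDP. Once this tightness-style reduction is in place, the remaining infimum computation in the third step is routine.
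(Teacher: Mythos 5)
Your overall route is the same as the paper's: push the joint LDP for the i.i.d.\ tuple $(\tau_1,\dots,\tau_{n+1})$ through the continuous map $\Phi$ and compute the resulting infimum. Your case analysis identifying $\inf_{t_1}\{J(t_1)+\sum_i J(t_1u_i)\}$ with $J_n$ is correct and matches the paper's computation exactly (the constraint $t_1\ge 1$, $t_1u_i\ge 1$ forced by $J=\infty$ on $(0,1)$, then $t_1=\max(1,1/\tilde u_n)$ as optimizer).

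However, your proposed fix for the non-goodness of $J$ does not work as stated. You claim that after truncating to $\{\tau_i\le M\}$ the complement ``contributes at rate tending to $+\infty$ as $M\to\infty$.'' By \rf{fgz-2}, $\mathbb{P}\{\tau_i>M\}$ decays at speed $-\log(1-\alpha)$ with rate exactly $1$ for \emph{every} $M>1$; the rate does not increase with $M$. This is precisely why $J$ fails to be good ($J\equiv 1$ on $(1,\infty)$, so the level set $\{J\le 1\}=[1,\infty)$ is non-compact) and why exponential tightness of the tuple genuinely fails. Consequently the naive bound $\mathbb{P}\{X\in F\}\le\mathbb{P}\{X\in F,\ \text{all }\tau_i\le M\}+\sum_i\mathbb{P}\{\tau_i>M\}$ only yields an upper bound of $\max(-\inf_{F\cap[0,M]^{n+1}}\tilde J,\,-1)$, which is too weak whenever $\inf_F\tilde J>1$ (e.g.\ closed sets forcing several coordinates above $1$, where the correct rate is up to $n+1$). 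To close the upper bound on non-compact closed sets one must instead exploit the product structure directly: on the event that a subset $S$ of coordinates exceeds $M$, independence still charges a factor of order $(1-\alpha)$ \emph{per coordinate in $S$}, and the remaining coordinates must still realize the residual constraint. The paper sidesteps a detailed tightness argument and simply asserts (in the remark following the theorem) that direct calculation shows the upper and lower bounds are given by the rate function $J_n$; your write-up should either carry out that direct verification or replace the truncation step, since as written that step is false.
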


\proof  Note that the map
\[
\Phi: (0,\infty)^{n+1} \ra (0,\infty)^n, \ (x_1,\ldots,x_{n+1})\ra (\frac{x_2}{x_1},\ldots, \frac{x_{n+1}}{x_1}) 
\]
is clearly continuous. It follows from the contraction principle that large deviation upper and lower estimates hold for the family $\{(\sigma_2,\ldots,\sigma_{n+1}): 0<\alpha<1\}$ with the bounds given by the function
\[
\tilde{J}_n(u_1,\ldots,u_n)=\inf\{\sum_{i=1}^{n+1} J(x_i): x_{j+1}=u_{j}x_1, j=1,\ldots, n\}.
\]
Since $J(x)=\infty$ for $x$ in $(0,1)$, it follows that
\[
\tilde{J}_n(u_1,\ldots,u_n)=\inf\{\sum_{i=1}^{n+1} J(x_i): x_1\geq 1, \ x_{j+1}=u_{j}x_1\geq 1,\  j=1,\ldots, n\}=J_n(u_1,\ldots,u_n)
\]
and the theorem follows.

\hfill $\Box$

\noindent {\bf Remark}. The contraction principle used in Theorem~\ref{fgz-t3} does not lead to a large deviation principle in general due to the fact that the starting rate function is not good.  But here and later on, direct calculations show that the upper and lower bounds are all given by rate functions. 

\subsection{Large Deviations for $\Xi_{\alpha,0,\nu}$}

Let $M_1([0,1])$ denote the space of probabilities on $[0,1]$ equipped with the weak topology.  For any $\mu$ in $M_1([0,1])$ define
\[
{\cal I}(\mu)=\begin{cases}
0, & \mu=\nu\\
n,&\mu=\sum_{i=1}^np_i\delta_{x_i}+(1-\sum_{i=1}^n p_i)\nu\\
\infty,& \mbox{ otherwise}.
\end{cases}
\]

\noindent The main result of this subsection is 
\begin{theorem}\label{fgz-t4}
The family $\{\Xi_{\alpha,0,\nu}: 0<\alpha<1\}$ satisfies a large deviation principle on $M_1([0,1])$ with speed $-\log(1-\alpha)$  and good rate function ${\cal I}(\cdot)$ as $\alpha$ tends to one.
\end{theorem}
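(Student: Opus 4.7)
The plan is to prove the LDP by splitting $\Xi_{\alpha,0,\nu}$ into a macroscopic finite-dimensional piece carrying the leading atoms and a microscopic tail that is exponentially negligible on the scale $-\log(1-\alpha)$. First, I would verify that $\mathcal{I}$ is a good rate function. Its level set $\{\mu:\mathcal{I}(\mu)\leq m\}$ is the union of $\{\nu\}$ and, for each integer $k\leq m$, the continuous image under $((x_i),(p_i))\mapsto \sum_{i=1}^k p_i\delta_{x_i}+(1-\sum p_i)\nu$ of the compact set $[0,1]^k\times\{(p_1,\ldots,p_k):p_i\geq 0,\,\sum p_i\leq 1\}$, hence compact in the weak topology. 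Lower semicontinuity follows from the elementary observation that weak limits of mixtures with at most $k$ atoms have at most $k$ atoms.

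The heart of the argument is the following microscopic estimate. Writing $\Xi^{(n)}=\sum_{i=1}^n P_i(\alpha,0)\delta_{\xi_i}$ and $\Xi^{(>n)}=\Xi_{\alpha,0,\nu}-\Xi^{(n)}$, I aim to show
\be\label{plan-micro}
\lim_{n\to\infty}\limsup_{\alpha\to 1}\frac{1}{-\log(1-\alpha)}\log \mathbb{P}\Bigl(d\bigl(\Xi^{(>n)},(1-\sum_{i=1}^n P_i)\nu\bigr)>\epsilon\Bigr)=-\infty,
\ee
where $d$ is any metric generating the weak topology on $M_1([0,1])$. Conditionally on the weights $\{P_i\}$, the variables $\{\xi_i\}_{i>n}$ are i.i.d.\ uniform, and for every Lipschitz $f$ the quantity $\sum_{i>n}P_i f(\xi_i)$ has conditional variance at most $\|f\|_\infty^2\sum_{i>n}P_i^2$. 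Since $\mathbb{E}[\varphi_2(\mathbf{P}(\alpha,0))]=1-\alpha$, combining a Hoeffding-type concentration conditional on the weights with high-moment control of $\sum_{i>n}P_i^2$ and the exponential moment bound of Lemma~\ref{ldp-l2} would yield concentration around the conditional mean $(1-\sum_{i\leq n}P_i)\int f\,d\nu$ at a rate super-polynomial in $1-\alpha$ for each $f$ in a countable determining class, hence \rf{plan-micro}.

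Granting \rf{plan-micro}, $\Xi_{\alpha,0,\nu}$ is exponentially equivalent at each level $n$ to the truncated measure $\sum_{i=1}^n P_i\delta_{\xi_i}+(1-\sum_{i=1}^n P_i)\nu$. Because the $\xi_i$'s are i.i.d.\ uniform and independent of the weights and locations carry no rate, the finite-level LDP reduces by the contraction principle applied to $((p_i),(x_i))\mapsto\sum p_i\delta_{x_i}+(1-\sum p_i)\nu$ to a joint LDP for $(P_1,\ldots,P_n)$ on $\nabla$ with speed $-\log(1-\alpha)$. That weight LDP I would derive from the representation $P_i=J_i(\rho_1)/\rho_1$ combined with Theorem~\ref{sec3-t2} (so $\rho_1\to 1$ in probability and $P_i\approx J_i(\rho_1)$) and Theorems~\ref{fgz-t2}--\ref{fgz-t3}, showing that each macroscopic weight costs one unit of rate while microscopic ones are free. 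A Dawson-G\"artner projective limit as $n\to\infty$ promotes these finite-level LDPs to the full LDP on $M_1([0,1])$ with rate $\mathcal{I}$, and goodness of $\mathcal{I}$ guarantees that the upper bound extends from compact to closed sets.

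The main obstacle is \rf{plan-micro}: super-polynomial decay in $1-\alpha$ is far stronger than the $L^2$ bound $\sum P_i^2\sim 1-\alpha$ suggests, and one must simultaneously control high moments of $\sum_{i>n}P_i^2$ and the joint exponential-moment structure of the leading weights via the subordinator representation of Section~2. A secondary technicality is that the function $J$ in Theorem~\ref{fgz-t2} is not a good rate function, so some care is needed when applying the contraction principle on non-compact domains; this is handled by exploiting the a priori bounds $P_i\leq 1$ and $\sum P_i\leq 1$ to restrict attention to the compact simplex, where the goodness is restored.
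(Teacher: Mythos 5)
Your route — truncate $\Xi_{\alpha,0,\nu}$ at its top $n$ atoms, show the tail is exponentially equivalent to $(1-\sum_{i\le n}P_i)\nu$, contract a finite-level weight LDP, and pass to the limit in $n$ — is genuinely different from the paper's, which never touches the ranked atoms: the paper instead derives the LDP for the partition masses $(\Xi_{\alpha,0,\nu}(B_1),\ldots,\Xi_{\alpha,0,\nu}(B_{n+1}))$ directly from the subordinator-increment representation (Lemma~\ref{fgz-l1}, Theorems~\ref{fgz-t2}--\ref{fgz-t3}, contraction), then assembles the full LDP via Puhalskii's local criterion together with the variational identification $\sup_n{\cal I}_n={\cal I}$ (Lemma~\ref{fgz-l3}). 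The partition route exploits the independence of subordinator increments over disjoint intervals and thereby avoids exactly the two estimates your plan hinges on. As it stands, your proposal has two genuine gaps at those points.

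First, the microscopic estimate \rf{plan-micro}. The tools you cite do not deliver it. Lemma~\ref{ldp-l2} controls exponential moments of $(1-\alpha)(P_1^{-1}-1)$ and is used in the $\alpha\to0$ regime; it says nothing about the tail weights being large as $\alpha\to1$. The Hoeffding step conditional on the weights requires $\sum_{i>n}P_i^2$ to be small on an event of probability $1-o((1-\alpha)^M)$ for every $M$ (after $n\to\infty$), and the available moment information points the other way: from the formulas in Section~5, $\mathbb{E}[\varphi_2^k({\bf P}(\alpha,0))]\asymp(1-\alpha)$ for $k=1,2,3$, so $\varphi_2$ is \emph{not} concentrated at scale $1-\alpha$ — with probability of order $(1-\alpha)$ it is macroscopic. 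What you actually need is $\mathbb{P}\{\sum_{i>n}P_i^2>\delta\}\lesssim(1-\alpha)^{n+1}$, i.e.\ that $n+1$ simultaneously macroscopic weights cost $n+1$ units of rate. That is essentially the finite-level LDP itself, so your scheme is close to circular unless this bound is proved independently (e.g.\ from the Poisson representation \rf{pre-q2}). Second, the finite-level LDP for the ranked vector $(P_1,\ldots,P_n)$ is asserted to follow from $P_i=J_i(\rho_1)/\rho_1$ together with Theorems~\ref{fgz-t2}--\ref{fgz-t3}; but those theorems concern $\rho_1$ and ratios of \emph{independent copies} $\tau_i/\tau_1$, not the joint law of the ordered jumps of a single subordinator, so the weight LDP does not follow from them. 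Both gaps are fillable (via the joint density of the top jumps or the representation \rf{pre-q2}), but they are the hard part of the proof, not technicalities, and your sketch leaves them open; the limit passage in $n$ is also an exponential-approximation argument rather than a Dawson--G\"artner projective limit, and still requires an identification of the limiting rate function playing the role of Lemma~\ref{fgz-l3}.
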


\noindent We prove this theorem through a series of lemmas.

 \begin{lemma}\label{fgz-l1}
 For any $n \geq 1$,  let $0=t_{0}<t_{1}<\cdots<t_{n}<t_{n+1}=1$ and  $B_{1},\cdots,B_{n+1}$ be a measurable partition of $[0,1]$ such that $\nu(B_{i})=t_{i}-t_{i-1}$. 
 Then 
 \beq
 &&(\Xi_{\alpha,0,\nu}(B_{1}),\cdots,\Xi_{\alpha,0, \nu}(B_{n+1}))\\
 &&\  \eqdist
\rho_1^{-1}(\rho_{t_{1}}, \rho_{t_{2}}-\rho_{t_{1}},\cdots,\rho_{t_{k}}-\rho_{t_{k-1}}, \rho_1-\rho_{t_{k}})\\
&& \  \eqdist (t_1^{1/\alpha}+\sum_{k=2}^{n+1}(t_k-t_{k-1})^{1/\alpha}\sigma_k)^{-1}(t_1^{1/\alpha},(t_2-t_1)^{1/\alpha}\sigma_2,\ldots, (1-t_n)^{1/\alpha}\sigma_{n+1})
 \eeq
 where $\eqdist$ denotes equality in distribution.
  \end{lemma}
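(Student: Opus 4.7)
The plan is to establish the two displayed identities in succession, relying on the Poisson point--process representation of the stable subordinator and on its self--similarity.

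For the first identity I would observe that, since $\xi_{1},\xi_{2},\ldots$ are i.i.d.\ with distribution $\nu$ and independent of $\mathbf{P}(\alpha,0)$, the joint law of $(\Xi_{\alpha,0,\nu}(B_{1}),\ldots,\Xi_{\alpha,0,\nu}(B_{n+1}))$ depends on the partition $\{B_{i}\}$ only through the mass vector $(\nu(B_{1}),\ldots,\nu(B_{n+1}))=(t_{1},t_{2}-t_{1},\ldots,1-t_{n})$. I may therefore assume without loss of generality that $B_{i}=[t_{i-1},t_{i}]$. Next I would realize $\Xi_{\alpha,0,\nu}$ jointly with $\rho$ via the Poisson point--process representation: let $\{(T_{i},J_{i})\}$ be the atoms of a Poisson process on $[0,1]\times(0,\infty)$ with intensity $dt\otimes\Lambda_{\alpha}(dx)$, so that $\rho_{t}=\sum_{T_{i}\leq t}J_{i}$. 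By Theorem~\ref{pre-t1}, the normalized ranked jumps $(J_{i}/\rho_{1})$ have law $PD(\alpha,0)$, and by the marking property of the Poisson process the times $T_{i}$ are, conditionally on the jump sizes, i.i.d.\ uniform on $[0,1]$; this matches the joint law of $\{(P_{i}(\alpha,0),\xi_{i})\}$ since here $\nu$ is the uniform distribution. Consequently $\Xi_{\alpha,0,\nu}(B_{j})\eqdist\rho_{1}^{-1}(\rho_{t_{j}}-\rho_{t_{j-1}})$, jointly in $j$, which is the first displayed equality.

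For the second identity I would invoke two properties of the stable subordinator: independence of its increments over disjoint intervals, and its self--similarity $\rho_{s}\eqdist s^{1/\alpha}\rho_{1}$. Together they give $(\rho_{t_{j}}-\rho_{t_{j-1}})_{j=1,\ldots,n+1}\eqdist((t_{j}-t_{j-1})^{1/\alpha}\tau_{j})_{j=1,\ldots,n+1}$ for i.i.d.\ copies $\tau_{1},\ldots,\tau_{n+1}$ of $\rho_{1}$, and the identity $\rho_{1}=\sum_{j}(\rho_{t_{j}}-\rho_{t_{j-1}})$ translates into the normalization $\sum_{j}(t_{j}-t_{j-1})^{1/\alpha}\tau_{j}$. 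Factoring $\tau_{1}$ out of both numerator and denominator converts the remaining $\tau_{k}$, $k\geq 2$, into the ratios $\sigma_{k}=\tau_{k}/\tau_{1}$, producing exactly the right--hand side of the statement.

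The only delicate point is the joint realization in the first part: one must be sure that the Poisson point--process construction yields simultaneously $PD(\alpha,0)$--distributed weights and independent uniform locations that are distributionally interchangeable with $\{(P_{i}(\alpha,0),\xi_{i})\}$. This is a standard application of the marking property combined with Theorem~\ref{pre-t1}, but it is the only step that requires more than a purely algebraic verification; once in place, the rest of the argument reduces to factoring scalars and rewriting the ratios, and nothing beyond \eqref{q2'} and elementary properties of $\rho$ is needed.
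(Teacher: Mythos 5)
Your proposal is correct and takes essentially the same route as the paper: the paper's proof simply cites Pitman (1996) for the first distributional identity and then, exactly as you do, obtains the second from the independent increments of the subordinator together with the scaling relation $\rho_t \eqdist t^{1/\alpha}\rho_1$, followed by factoring out $\tau_1$ to produce the ratios $\sigma_k$. Your Poisson marking argument for the first identity is a sound substitute for that citation, so nothing is missing.
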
 
 \proof The first equality is from \cite{Pitman96} and the second equality follows from the independent increments of the stable subordinator and the equality
 $
 \rho_t \eqdist t^{1/\alpha}\rho_1.
 $
 
 \hfill $\Box$

 \begin{lemma}\label{fgz-l2}
 Let 
 \[
 \triangle_{n+1} :=\{(y_1,\ldots,y_{n+1}): y_i\geq 0, \sum_{k=1}^{n+1} y_k =1\}.
 \]
 
\noindent Then the  family $\{(\Xi_{\alpha,0,\nu}(B_{1}),\cdots,\Xi_{\alpha,0, \nu}(B_{n+1})): 0<\alpha<1\}$ satisfies a large deviation principle on $\triangle_{n+1}$ with speed $-\log(1-\alpha)$  and good rate function ${\cal I}_n(\cdot)$ as $\alpha$ tends to one, where
\[
{\cal I}_n(y_1,\ldots,y_{n+1})=(n+1)-\gamma(y_1,\ldots,y_{n+1})
\]
with
\[
\gamma(y_1,\ldots,y_{n+1})=\#\{1\leq i \leq n+1: \frac{y_i}{t_i-t_{i-1}}=\min\{\frac{y_k}{t_k-t_{k-1}}: 1\leq k \leq n+1\}\}.
\]
 \end{lemma}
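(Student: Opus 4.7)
The plan is to apply a contraction-principle-style argument to transport the LDP of Theorem~\ref{fgz-t3} through the representation of Lemma~\ref{fgz-l1}. Write $d_k=t_k-t_{k-1}$ for $k=1,\ldots,n+1$ and, for $u=(u_2,\ldots,u_{n+1})\in(0,\infty)^n$, define
\[
\Psi_\alpha(u)=\frac{(d_1^{1/\alpha},\,d_2^{1/\alpha}u_2,\ldots,d_{n+1}^{1/\alpha}u_{n+1})}{d_1^{1/\alpha}+\sum_{k=2}^{n+1}d_k^{1/\alpha}u_k}
\]
and let $\hat\Psi$ denote its formal $\alpha=1$ limit. By Lemma~\ref{fgz-l1}, $(\Xi_{\alpha,0,\nu}(B_1),\ldots,\Xi_{\alpha,0,\nu}(B_{n+1}))\eqdist\Psi_\alpha(\sigma_2,\ldots,\sigma_{n+1})$, so the task reduces to transporting the LDP through $\Psi_\alpha$.

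The first step is to show that $\Psi_\alpha\to\hat\Psi$ uniformly on $(0,\infty)^n$. Writing $d_k^{1/\alpha}=d_k(1+\delta_k(\alpha))$ with $\delta_k(\alpha)\to 0$ as $\alpha\to 1$, and using that both $\Psi_\alpha(u)$ and $\hat\Psi(u)$ are probability vectors (so each coordinate is bounded by $1$), a short calculation gives
\[
\|\Psi_\alpha(u)-\hat\Psi(u)\|_\infty\le\frac{2\max_k|\delta_k(\alpha)|}{1-\max_k|\delta_k(\alpha)|}
\]
uniformly in $u$. Hence $\Psi_\alpha(\sigma_2,\ldots,\sigma_{n+1})$ and $\hat\Psi(\sigma_2,\ldots,\sigma_{n+1})$ are exponentially equivalent at speed $-\log(1-\alpha)$, reducing the problem to an LDP for the latter. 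The second step is to apply the contraction principle via the continuous map $\hat\Psi$. For $y\in\triangle_{n+1}$ with $y_1>0$ the equation $\hat\Psi(u)=y$ has the unique preimage $u^*_k=d_1y_k/(d_ky_1)$, and a brief case analysis on whether $y_1/d_1$ attains $\min_k y_k/d_k$ identifies $J_n(u^*)=(n+1)-\gamma(y)={\cal I}_n(y)$. For boundary points $y$ with $y_1=0$ there is no preimage, and one approximates $y$ by interior points along sequences $u^{(m)}$ with suitable coordinates tending to infinity, verifying that the limiting cost again equals ${\cal I}_n(y)$.

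The main obstacle, as already flagged in the remark after Theorem~\ref{fgz-t3}, is that $J_n$ is not a good rate function and $\hat\Psi$ does not cover the boundary face $\{y_1=0\}$ of $\triangle_{n+1}$. Consequently the contraction principle yields only matching upper and lower bounds rather than an abstract LDP; these bounds have to be verified by direct calculation using the tail estimates in Theorem~\ref{fgz-t1}, with the boundary regime, where the preimage under $\hat\Psi$ escapes to infinity, being the most delicate step. Since $\gamma$ is upper semi-continuous on the compact simplex $\triangle_{n+1}$ and ${\cal I}_n\le n+1$, the rate function ${\cal I}_n$ is lower semi-continuous with compact level sets, and therefore good.
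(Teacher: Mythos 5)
Your proposal is correct and follows essentially the same route as the paper: reduce via Lemma~\ref{fgz-l1} to the LDP of Theorem~\ref{fgz-t3}, push it through the continuous normalization map, identify the induced rate function from the (unique, when $y_1>0$) preimage, and get goodness from compactness of $\triangle_{n+1}$ plus lower semicontinuity. The only differences are cosmetic: you absorb the deterministic factors $d_k^{1/\alpha}$ by uniform convergence and exponential equivalence, whereas the paper treats them as a degenerate LDP combined by independence before contracting, and you are slightly more explicit than the paper about the boundary face $y_1=0$ and about the fact that non-goodness of $J_n$ forces the upper and lower bounds to be checked directly (the paper defers this to the remark after Theorem~\ref{fgz-t3}).
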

  \proof  First note that the map
  \beq
 && H: [0,1]^n\times (0,\infty)^n \ra [0,1], \\
 &&  (v_1,\ldots,v_{n+1}; u_1, \ldots,u_n)\ra (v_1+\sum_{k=2}^{n+1}v_k u_{k-1})^{-1}(v_1, v_2 u_1,\ldots,v_{n+1}u_n)
  \eeq
  is continuous and $(\Xi_{\alpha,0,\nu}(B_{1}),\cdots,\Xi_{\alpha,0, \nu}(B_{n+1}))$ has the same distribution as
  \[
  H(t_1^{1/\alpha}, \ldots, (1-t_n)^{1/\alpha}; \sigma_2,\ldots, \sigma_{n+1}).
  \]
  
  Noting that $(t_1^{1/\alpha}, \ldots, (1-t_n)^{1/\alpha})$  satisfies a full large deviation principle with effective domain $(t_1, \ldots, (1-t_n))$. It follows from Theorem~\ref{fgz-t3}, the independence between $(t_1^{1/\alpha}, \ldots, (1-t_n)^{1/\alpha})$ and $(\sigma_2, \ldots, \sigma_{n+1})$  and the contraction principle that large deviation estimates hold for  $(\Xi_{\alpha,0,\nu}(B_{1}),\cdots,\Xi_{\alpha,0, \nu}(B_{n+1}))$  with upper and lower bounds given by the function
  \beq
  \tilde{\cal I}_n(y_1, \ldots, y_{n+1})&=&\inf\{J_n(u_1,\ldots,u_n): u_{i}\in (0,\infty),  u_i=\frac{t_1}{y_1}\frac{y_{i+1}}{t_{i+1}-t_{i}}, i=1, \ldots, n\}\\
  &=&\left\{ \begin{array}{ll}
n+1-\tilde{r}_n,& \min_{2\leq i \leq n+1}\{\frac{y_i}{t_i-t_{i-1}}\} <\frac{y_1}{t_1},\\
n-\tilde{r}_n,& \min_{2\leq i \leq n+1}\{\frac{y_i}{t_i-t_{i-1}}\} =\frac{y_1}{t_1},\\
n, &\min_{2\leq i \leq n+1}\{\frac{y_i}{t_i-t_{i-1}}\} >\frac{y_1}{t_1}.
\end{array}
\right.
  \eeq
   where $\tilde{r}_n$ is the frequency of   $\min_{2\leq i \leq n+1}\{\frac{y_i}{t_i-t_{i-1}}\}$  among $\frac{y_2}{t_2-t_1},\ldots, \frac{y_{n+1}}{1-t_n}$.  On the other hand,
   \[
   \gamma(y_1,\ldots, y_{n+1})=\left\{ \begin{array}{ll}
\tilde{r}_n,& \min_{2\leq i \leq n+1}\{\frac{y_i}{t_i-t_{i-1}}\} <\frac{y_1}{t_1},\\
\tilde{r}_n+1,& \min_{2\leq i \leq n+1}\{\frac{y_i}{t_i-t_{i-1}}\} =\frac{y_1}{t_1},\\
1, &\min_{2\leq i \leq n+1}\{\frac{y_i}{t_i-t_{i-1}}\} >\frac{y_1}{t_1}.
\end{array}
\right.
   \]
   Hence we obtain that $\tilde{\cal I}_n(\cdot)={\cal I}_n(\cdot)$. It remains to show that ${\cal I}_n(\cdot)$  is a good rate function. Since $\triangle_{n+1}$ is compact, it suffices to verify the lower semicontinuity of the ${\cal I}_n(\cdot)$. For any point $(y_1,\ldots,y_{n+1})$ in $\triangle_{n+1}$, let $\gamma(y_1, \ldots, y_{n+1})=m$. If the neighbourhood of $(y_1,\ldots,y_{n+1})$ is small enough, then the frequency of the minimum in each point inside the neighbourhood is at least $m$. Hence ${\cal I}(\cdot)$ is lower semicontinuous. 
 
  \hfill $\Box$

\begin{lemma}\label{fgz-l3}
 
\beqn\label{fgz-5}
  {\cal I}(\mu)&=&\sup\{{\cal I}_n(\mu([0,t_1]),\mu((t_1,t_2]),\ldots, \mu((t_n,1]):\\
&&  0=t_0<t_1<\cdots<t_n<t_{n+1}=1, n =1,2,\ldots\}.\nn
\eeqn

The supremum can be taken over all continuity points $t_1, \ldots, t_n$ of $\mu$.

 \end{lemma}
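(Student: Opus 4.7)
Let $\tilde{\cI}(\mu)$ denote the supremum on the right-hand side of \rf{fgz-5}, taken over all partitions $0=t_0<t_1<\cdots<t_{m+1}=1$ of $[0,1]$. The plan is to prove the identity $\cI(\mu)=\tilde{\cI}(\mu)$ via two matching inequalities, and then separately verify that the sup is unchanged when restricted to continuity points of $\mu$.

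\textbf{Upper bound $\tilde{\cI}(\mu)\le\cI(\mu)$.} If $\cI(\mu)=\infty$ there is nothing to check, so assume $\mu=\sum_{i=1}^{n}p_{i}\delta_{x_{i}}+c\nu$ with $p_{i}>0$ and $c=1-\sum_i p_{i}$. For an arbitrary partition, with $I_j=(t_{j-1},t_j]$, the ratios
$$r_j=\frac{\mu(I_j)}{t_j-t_{j-1}}=c+\frac{1}{t_j-t_{j-1}}\sum_{i:\,x_i\in I_j}p_i$$
equal $c$ on atom-free intervals and exceed $c$ on atom-containing intervals. Let $k$ be the number of atom-containing intervals; since there are only $n$ atoms, $k\le n$. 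If $k<m+1$ the minimum is $c$ and $\gamma=m+1-k$, so $\cI_m=k\le n$. If $k=m+1$ then $m+1\le n$ and $\gamma\ge 1$ give $\cI_m\le m\le n-1$. Either way $\cI_m\le n$.

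\textbf{Lower bound $\cI(\mu)\le\tilde{\cI}(\mu)$.} When $\cI(\mu)=n\ge 1$ is finite, pick continuity points $t_1<\cdots<t_n$ that separate the atoms $x_1,\ldots,x_n$, placing each $x_i$ in its own subinterval. The single atom-free interval has ratio $c$, the other $n$ have strictly larger ratios, giving $\gamma=1$ and $\cI_n=n$. The substantive case is $\cI(\mu)=\infty$, where $\mu$ fails to be of the form $\sum p_i\delta_{x_i}+c\nu$; then either (i) $\mu$ has infinitely many atoms, or (ii) the continuous part $\mu_c$ is not a constant multiple of $\nu$. The plan is to exhibit, for every $N$, a partition with $m+1\ge N$ intervals whose ratios $r_j$ are pairwise distinct, so that $\gamma=1$ and $\cI_m=m$. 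In case (i), isolate $N$ distinct atoms in short disjoint intervals of carefully chosen lengths so that their ratios are all distinct and strictly exceed the ratios of the complementary intervals, then refine one complementary interval to break any residual tie. In case (ii), the function $t\mapsto\mu_c([0,t])$ is not linear on $[0,1]$, so there exist continuity-point subintervals with differing ratios $\mu(\cdot)/\nu(\cdot)$; iterating such splittings produces partitions with arbitrarily many intervals and all $r_j$ distinct.

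\textbf{Reduction to continuity points and main obstacle.} Since $\mu$ has at most countably many atoms, its continuity points are dense in $[0,1]$, and for any partition the endpoints can be perturbed to nearby continuity points with $\gamma$ only decreasing (ties can be broken but not created by a generic perturbation), so $\cI_m$ does not decrease. Hence the sup over all partitions equals the sup over continuity-point partitions. The most delicate step is case (ii) of the lower bound, where $\mu_c$ may be singular continuous and possess no pointwise density; there the argument must exploit the non-linearity of $t\mapsto\mu_c([0,t])$ and an iterative refinement to force pairwise distinct ratios $r_j$ in arbitrarily fine partitions.
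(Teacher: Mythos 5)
Your overall architecture matches the paper's: a case analysis on the structure of $\mu$, an exact computation when ${\cal I}(\mu)$ is finite, and the exhibition of partitions with arbitrarily large ${\cal I}_n$ when ${\cal I}(\mu)=\infty$. Your finite case is in fact carried out more explicitly than in the paper: the counting of atom-containing intervals $k$ for the upper bound $\tilde{\cal I}(\mu)\le n$ and the separating partition for the lower bound are both correct. The gap is in the infinite case with finitely many atoms (the paper's Cases 2 and 3), which you leave as a plan rather than a proof, and the plan as stated is not the right one.

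Two concrete problems there. First, your target of making \emph{all} ratios $r_j$ pairwise distinct is both stronger than necessary and not always achievable: if $\mu_c$ is twice the uniform measure on $[0,1/2]$, any partition yields at most three distinct ratio values ($2$, $0$, and one intermediate value for the interval containing $1/2$), yet ${\cal I}_m=m$ is still attained because the minimum ratio is achieved by a single interval. Since ${\cal I}_n=(n+1)-\gamma$, what is actually needed is only that $m$ designated intervals have ratios strictly above the minimum, forcing $\gamma\le(n+1)-m$. Second, "iterating splittings" using the non-linearity of $t\mapsto\mu_c([0,t])$ does not by itself control the ratios of \emph{all} intervals of the partition simultaneously, including the complementary ones. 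The paper supplies the missing analytic input via differentiation theory: when the absolutely continuous part has density $f$ with $\nu(\{f\neq1\})>0$ it chooses $m$ Lebesgue points in $\{f<1\}$ and $m$ in $\{f>1\}$ (where the singular part has derivative zero), so small intervals around the latter have ratios strictly exceeding those around the former; when the deviation is purely singular continuous it uses points where $F_s'=0$ against points where $F_s'$ is infinite or fails to exist. You explicitly flag this as "the most delicate step" but do not carry it out, so the proof is incomplete precisely where the lemma's analytic content lies.
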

  \proof   We divide the proof into several cases. Let $\mu$ be any probability in $M_1([0,1])$. By  Lebesgue's Decomposition Theorem, one can write
  \[
  \mu=\la_1\mu_a+\la_2\mu_s+\la_3\mu_{ac}
  \]
  where $\mu_{a}$ is atomic, $\mu_s$ is singular with respect to $\nu$,  $\mu_{ac}$ is absolutely continuous with respect to $\nu$, and 
  \[
  \la_1+\la_2+\la_3 =1,\  \la_i \geq 0, i=1,2,3.
  \] 
  Set 
  $$F_s(x)=\mu_s([0,x]), \ \ f(x)= \frac{d\,\mu_{ac}}{d\,\nu}(x).$$ 
  
\vspace{0.4cm}
  
  \noindent {\bf Case 1:} The probability $\mu$ has countable number of atoms.
 
  \vspace{0.2cm}
  
  Since the total mass of $\mu_a$ is  equal to one,  there exists a countable infinite number of atoms with all different value of masses. Let  the masses of these atoms be ranked in descending order and the corresponding atoms are $x_1,x_2,\ldots$.  Clearly $\mu_s(\{x_i\})=\mu_{ac}(\{x_i\})=0$ for all $i\geq 1$. For any $m\geq 2$, by the continuity of probabilities, one can choose 
 small positive numbers $\ep_1,\ep_2, \ldots,\ep_m$  such that $x_i\pm \ep_i, 1\leq i \leq m$ are the continuity points of $\mu$, $(x_i-\ep_i,x_i+\ep_i]\subset [0,1], 1\leq i\leq m$ are disjoint, and  $$\mu((x_1-\ep_1, x_1+\ep_1])>\mu((x_2-\ep_2, x_2+\ep_2]) >\cdots>\mu((x_m-\ep_m, x_m+\ep_m]) .$$ The partition based on the points $\{x_i\pm \ep_i\: i=1,2,\ldots,m\}$ clearly gives a lower bound  $m-1$ for ${\cal I}(\cdot)$.  Since $m$ is arbitrary, the supremum taken over continuity points of $\mu$ gives the value of infinity which is the same as  ${\cal I}(\cdot)$. 
  
\vspace{0.2cm}  
  
\noindent {\bf Case 2:} The probability $\mu$ has at most finite number of atoms and $\nu(\{f(x)\neq 1\})>0$.  

\vspace{0.2cm}

Let $A =\{x\in [0,1]: f(x) <1\}, B=\{x\in [0,1]:f(x)>1\}$, and $C=\{x\in [0,1]: f(x)=1\}$. Then we have 
\[
\mu_{ac}(A)< \nu(A), \mu_{ac}(B)>\nu(B), \mu_{ac}(C)=\nu(C)\]
and
\[
\nu(A)-\mu_{ac}(A)=\mu_{ac}(B)-\nu(B)
\] 
The fact that $\nu\{C\}<1$ thus implies that $\nu(A)>0, \nu(B)>0.$ For any $m\geq 1$ we can find 
$0<s_1<\cdots<s_m<1, 0<t_1<\cdots<t_m<1$ such that 
\beq 
&&\{s_i\}_{1\leq i\leq m}\subset A,  \{t_i\}_{1\leq i\leq m}\subset B\\
&& \{s_i, t_i\}_{i\geq 1}\ \mbox{does not contain atoms of }\  \mu\\ 
&&  \mbox{when}\ \la_2 >0, F'_s(x)=0 \ \mbox {for}\ x=s_i \ \mbox{or}\ t_i, i\geq 1.
\eeq  
For any $i,j \geq 1$, we then have 
\beq
\lim_{\ep \ra 0}\frac{\mu((s_i-\ep, s_i+\ep])}{2\ep}&=& \la_3\lim_{\ep \ra 0}\frac{\mu_{ac}((s_i-\ep, s_i+\ep])}{2\ep}=\la_3 f(s_i)\\
&<& \la_3 f(t_j)=\la_3\lim_{\ep \ra 0}\frac{\mu_{ac}((t_j-\ep, t_j+\ep])}{2\ep}\\
&=&\lim_{\ep \ra 0}\frac{\mu((t_j-\ep, t_j+\ep])}{2\ep}.
\eeq

This makes it possible to choose $\ep_i>0$ such that  $s_i\pm \ep_i, t_j\pm \ep_j$ are all continuity points of $\mu$ and 
\[
\frac{\mu((s_i-\ep_i, s_i+\ep_i])}{\nu(s_i-\ep_i, s_i+\ep_i]} < \frac{\mu((t_j-\ep_j, t_j+\ep_j])}{\nu(t_j-\ep_j, t_j+\ep_j]}.\] 

This provides a lower bound of $m$ for $ {\cal I}(\mu)$. Since $m$ is arbitrary, we established \rf{fgz-5}  in this case.  
 
\vspace{0.2cm}  
  
\noindent {\bf Case 3:} The probability $\mu$ has at most finite number of atoms, $\la_2>0$ and  $\nu(\{f(x)\neq 1\})=0$.  

\vspace{0.2cm}  

It is clear that we have $\mu_{ac}=\nu$ in this case.  For any $m \geq 1$, the  singularity guarantees the existence of $0<s_1<\cdots<s_m<1, 0<t_1<\cdots<t_m<1$
 such that the derivative of $F_{s}(x)$ is zero for $x=t_i$ while the derivative at $s_i$ is either infinity or does not exist. Additionally we can choose $s_i,t_i$ so that none of them are atoms of $\mu_a$.   Let $\ep$ be small enough so that all  intervals 
 $(s_i-\ep,s_i+\ep]$ and $(t_i-\ep,t_i+\ep]$ $i=1, \ldots, m$ are disjoint.  Let ${\cal J}$  denote the partition of $[0,1]$ using $\{t_i\pm \ep,s_i\pm\ep: i=1,\ldots,m\}$.  
 One can then find a refined partition, using subsequence if necessary, $\tilde{\cal J}$ of ${\cal J}$, and positive numbers $\ep_0, \delta_0$ such that $s_i\pm \ep_0, t_i\pm \ep_0$ are continuity points of $\mu$ and  the value of $(2\ep_0)^{-1} \mu_{s}$ on each interval containing one of the $t_i'$s is less than $\delta_0$ while its value on each interval containing one of the $s_i'$s is greater than $\delta_0$.  In other words,  we can have for any $1\leq i,j \leq m$
 \[
 \frac{\mu((s_i-\ep_0, s_i+\ep_0])}{\nu((s_i-\ep_0, s_i+\ep_0])}\neq \frac{\mu((t_j-\ep_0, t_j+\ep_0])}{\nu((t_j-\ep_0, t_j+\ep_0])}. \]
 This implies that 
 \[
 \sup\{{\cal I}_n(\mu([0,t_1]),\mu((t_1,t_2]),\ldots, \mu((t_n,1]):
  0=t_0<t_1<\cdots<t_n<t_{n+1}=1, n \geq 1 \}\geq m. \]
The arbitrary selection of $m$ leads to  \rf{fgz-5} in this case.

\vspace{0.2cm}  
  
\noindent {\bf Case 4:} The probability $\mu$ has at most finite number of atoms, $\la_2=0$ and  $\nu(\{f(x)\neq 1\})=0$.  

\vspace{0.2cm}  

In this  case we have  $\mu=\la_1\mu_a +\la_3\nu$.  If $\la_1 =0$, then $\mu=\nu$ and ${\cal I}(\mu)$ is clearly zero.  Assume that $\la_1>0$ and the number of atoms is $r$. Let $F(x)=\mu([0,x])$. Since $r$ is finite,  any partition ${\cal J}$ of $[0,1]$ will have at most $r$ disjoint intervals covering these atoms. The maximum
 \[
 \sup\{{\cal I}_n(\mu([0,t_1]),\mu((t_1,t_2]),\ldots, \mu((t_n,1]):
 0=t_0<t_1<\cdots<t_n<t_{n+1}=1, n \geq 1\} \]
 is achieved at any partition with exactly $r$ disjoint intervals covering the $r$ atoms.

   \hfill $\Box$

   {\bf Proof of Theorem~\ref{fgz-t4}:} Let $C([0,1])$ be the space of all continuous function on $[0,1]$ equipped with the supremum norm, and  $\{g_j(x): j=1,2,... \}$ be a countable dense subset of $C([0,1])$. The set $\{g_j(x): j=1,2,... \}$ is clearly convergence determining
    on $M_1([0,1])$. Let $|g_j|=\sup_{x \in [0,1]}|g_j(x)|$ and $\{h_j(x)=\frac{g_j(x)}{|g_j|\vee 1}: j=1,...\}$
  is also convergence determining.

     For any $ \mu, \upsilon$ in $M_1([0,1])$, define
    \be\label{twodir20}
     d(\mu,\upsilon)=\sum_{j=1}^{\infty}\frac{1}{2^j}|\langle \mu, h_j\rangle-\langle\upsilon,h_j\rangle|.
    \ee
   Then $d$ is a metric generating the weak topology on  $M_1([0,])$.

    For any $\delta >0, \mu \in M_1([0,1])$, let
\[
B(\mu,\delta)=\{\upsilon \in M_1([0,1]): d(\upsilon,\mu)< \delta\}, \ \ \overline{B}(\mu,\delta)=\{\upsilon \in M_1([0,1]): d(\upsilon,\mu)\leq \delta\}.
\]

    Since $M_1([0,1])$ is compact, the family of the laws of $\Xi_{\alpha, 0, \nu}$ is exponentially tight. By theorem (P) in \cite{Pu91}, to prove the theorem it suffices to verify that

    \beqn\label{fgz-6}
 &&\lim_{\delta \ra
0}\liminf_{\alpha \ra 1 }\frac{1}{-\log(1-\alpha)}\log \mathbb{P}\{B(\mu,\delta)\} \\
&&= \lim_{\delta \ra
0}\limsup_{\alpha \ra 1}\frac{1}{-\log(1-\alpha)} \log \mathbb{P}\{\overline{B}(\mu,\delta)\} =- {\cal I}(\mu).\nn
\eeqn

Let $m$ be large enough so that
\be\label{addition1}
\{\upsilon \in M_1([0,1]):|\langle \mu, h_j \rangle -\langle \upsilon, h_j \rangle|< \delta/2:j=1,\cdots,m \} \subset B(\nu,\delta).
\ee

Consider $0=t_0<t_1< \cdots< t_n <t_{n+1}=1$ with $A_i=(t_{i-1},t_i], i=1, \ldots, n+1$ such that
\[
 \sup\{|h_j(x)-h_j(y)|: x,y \in A_i,i=1,\cdots,n; j=1,\cdots,m \}< \delta/8.
\]

Choosing $0< \delta_1 < \frac{\delta}{4n}$, and define
\[
 V_{t_1, \cdots, t_n}(\mu, \delta_1)= \{(y_1,...,y_n) \in \triangle_n: |y_i-\mu(A_i)|< \delta_1 , i=1,\cdots,n\}.
\]

For any $\upsilon$ in $M_1([0,1])$, let
\[
\Psi(\upsilon)=(\upsilon(A_1),...,\upsilon(A_{n+1})).
\]

If $\Psi(\upsilon)$ belongs to  $ V_{t_1, \cdots, t_n}(\mu, \delta_1)$, then for $j=1,...,m$
\beq
|\langle \upsilon, h_j \rangle -\langle \mu, h_j \rangle|&=&|\sum_{i=1}^{n+1}\int_{A_i}h_j(x)(\upsilon(dx)-\mu(dx))|\\
&<& \frac{\delta}{4} + n\delta_1  <  \delta/2,
\eeq
which implies that
\[
\Psi^{-1}( V_{t_1, \cdots, t_n}(\mu, \delta_1))\subset
\{\upsilon\in M_1([0,1]):|\langle \upsilon, h_j \rangle -\langle \mu, h_j \rangle|< \delta/2:j=1,\cdots,m \}.
\]
This combined with \rf{addition1} implies that
\[
 \Psi^{-1}(V_{t_1, \cdots, t_n}(\mu, \delta_1))\subset B(\mu,\delta).
\]

Since $V_{t_1, \cdots, t_n}(\mu, \delta_1)$ is open in $\triangle_{n}$, it follows from Lemma~\ref{fgz-l2} that
\beqn
 &&\lim_{\delta \ra
0}\liminf_{\alpha \ra 1}\frac{1}{-\log(1-\alpha)}\log \mathbb{P}\{B(\mu,\delta)\}\label{fgz-7}\\
&&\hspace{0.5cm}\geq
\lim_{\delta \ra
0}\liminf_{\alpha \ra 1}\frac{1}{-\log(1-\alpha)}\log \mathbb{P}\{\Psi^{-1}(V_{t_1, \cdots, t_n}(\mu, \delta_1))\}\nn\\
&&\hspace{0.5cm}= \lim_{\delta \ra
0}\liminf_{\alpha \ra 1}\frac{1}{-\log(1-\alpha)}\log \mathbb{P}\{(\Xi_{\alpha,0,\nu}(A_1),...,\Xi_{\alpha,0,\nu}(A_{n+1}))\in V_{t_1, \cdots, t_n}(\mu, \delta_1)\}\nn\\
&&\hspace{0.5cm}\geq -{\cal I}_{n+1}(\mu(A_1),...,\mu(A_{n+1}))\geq -{\cal I}(\mu).\nn
\eeqn

Next we assume that $t_1,...,t_n$  are continuity points of $\mu$. We denote the collection of all partitions from these points 
by ${\cal J}_{\mu}$. This implies that $\Psi(\upsilon)$ is continuous
 at $\mu$. Hence for any $\delta_2 >0$, one can choose $\delta >0$ small enough such that
\[
\overline{B}(\mu,\delta) \subset \Psi^{-1}(V_{t_1, \cdots, t_k}(\mu, \delta_2)).
\]

Let
\[
 \overline{V}_{t_1, \cdots, t_k}(\mu, \delta_2)=\{(y_1,...,y_n)\in \triangle_n: |y_i-\mu(A_i)|\leq \delta_2 , i=1,\cdots,n\}.
\]

Then we have

\beqn
&& \lim_{\delta\ra 0}\limsup_{\alpha \ra 1}\frac{1}{-\log(1-\alpha)} \log \mathbb{P}\{\overline{B}(\mu,\delta)\} \label{fgz-8}\\
&& \hspace{0.5cm}\leq  \limsup_{\alpha \ra 1}\frac{1}{-\log(1-\alpha)} \log \mathbb{P}\{(\Xi_{\alpha,0,\nu}(A_1),...,\Xi_{\alpha,0,\nu}(A_{n+1}))\in
\overline{V}_{t_1, \cdots, t_n}(\mu, \delta_2)\}.\nn
\eeqn

Letting $\delta_2$ go to zero and applying Lemma~\ref{fgz-l2} again, one gets

\[
\lim_{\delta\ra 0}\limsup_{\theta \ra \infty}\frac{1}{\theta} \log P\{\overline{B}(\mu,\delta)\} \leq -
{\cal I}_{n+1}(\mu(A_1),...,\mu(A_{n+1})).
\]

Finally, taking supremum over ${\cal J}_{\mu}$ and applying Lemma~\ref{fgz-l3}, one gets
\[
\lim_{\delta\ra 0}\limsup_{\alpha \ra 1}\frac{1}{-\log(1-\alpha)} \log \mathbb{P}\{\overline{B}(\mu,\delta)\} \leq -{\cal I}(\mu),
\]
which combined with \rf{fgz-7} leads to the theorem.

    \hfill $\Box$
  
   \section{Concluding Remarks}

The limiting procedure $\alpha$ going to zero arises naturally in the branching model considered in \cite{Athreya12}. This is a Galton-Watson branching process with offspring distribution $\{p_j: j\geq 0\}$ in the domain of attraction of a stable law of order $0<\alpha<1$ and $p_0=0$. For any $n \geq 1$, let $T_n$ be the coalescence  time  of any  two randomly selected individuals from the nth
generation. Then it is shown in \cite{Athreya12}  that
   \[
   \mathbb{P}\{n-T_n\leq k\} \ra \pi(k) \ \mbox{as}\ n \ra \infty
   \]
   where $\pi(k)$ is identified as the expectation of a random variable. It turns out that the random variable is just $\varphi_2({\bf P}(\alpha^k,0))$ and  $\pi(k)$ has the following more  explicit expression 
   \[
   \pi(k)=1-\alpha^k.
   \]
 In this context, $\varphi_2({\bf P}(\alpha^k,0))$ gives  the random probability distribution of the coalescence time and its asymptotic behaviour for large $k$ or equivalently $\alpha^k$ going to zero is described in Theorem~\ref{sec3-t1} and Theorem~\ref{ldp-t1}.  
     
 A comparison between  $\alpha$ converging to $1$ and $\theta$ converging to infinity reveals fundamental differences.  Under these limiting procedures, we have both ${\bf P}(\alpha,0)$ and  ${\bf P}(0,\theta)$ converge to  $(0,0,\ldots)$.   This can be seen from the distributions of $\varphi_2({\bf P}(\alpha,0))$ and $\varphi_2({\bf P}(0,\theta))$. 
  
 It is shown in \cite{Gri79a} and \cite{JKK02} that
  \[
  \sqrt{\theta/2} [\theta \varphi_2({\bf P}(0,\theta))-1] \Longrightarrow Z, \  \theta \ra \infty,
  \]
   where $Z$ is the standard normal random variable.  By Ewens sampling formula, we have
   \beq
   \mathbb{E}[\varphi_2({\bf P}(0,\theta))]&=& \frac{1}{\theta+1}\\
   \mathbb{E}[\varphi^2_2({\bf P}(0,\theta))]&=& \frac{3!+ \theta}{(\theta+1)(\theta+2)(\theta+3)} 
   \eeq
   and 
   \[
    \mathbb{E}[\varphi^3_2({\bf P}(0,\theta))]= \frac{1}{(\theta+1)_{(5)}}(5!+ 3\cdot 3!\theta+\theta^2).    
    \]
   
   The skewness of $\varphi_2({\bf P}(0,\theta))$  is given by
   \beq
   &&\frac{\mathbb{E}[\varphi_2^3({\bf P}(0,\theta))]- 3\mathbb{E}[\varphi_2({\bf P}(0,\theta))]\mathbb{E}[\varphi_2^2({\bf P}(0,\theta))]+2(\mathbb{E}[\varphi_2({\bf P}(0,\theta))])^3}{(\mathbb{E}[\varphi_2^2({\bf P}(0,\theta))]-(\mathbb{E}[\varphi_2({\bf P}(0,\theta))])^2)^{3/2}}\\
   && =\frac{O(\theta^{-5})}{O(\theta^{-4.5})} \ra 0, \ \   \theta\ra \infty
   \eeq
   which is consistent with the Gaussian limit.
   
   On the other hand, for $\varphi_2({\bf P}(\alpha,0))$ one has 
   \beq
   \mathbb{E}[\varphi_2({\bf P}(\alpha,0))] &=& 1-\alpha\\
    \mathbb{E}[\varphi_2^2({\bf P}(\alpha,0))] &= &\frac{(1-\alpha)(2-\alpha)(3-\alpha)+\alpha(1-\alpha)^2}{6},
   \eeq
   and 
   \beq
   Var(\varphi_2({\bf P}(\alpha,0)))&=& \frac{\alpha(1-\alpha)}{3}\\
   \mathbb{E}[\varphi_2^3({\bf P}(\alpha,0))]&=& \frac{1}{5!}[(1-\alpha)_{(5)}+ 3\alpha (1-\alpha)^2 (2-\alpha)(3-\alpha)+ \alpha^2(1-\alpha)^3].  
   \eeq
   
   This means that the skewness of $\varphi_2({\bf P}(\alpha,0))$ is of order $O((1-\alpha)/O((1-\alpha)^{3/2})$ which  goes to infinity as $\alpha$ converges to $1$. Thus the distribution of 
   $\varphi_2({\bf P}(\alpha,0))$ is skewed strongly to the right and a Gaussian  limit is unlikely.

   Another difference is reflected from the large deviation behaviour of the Pitman sampling formula.  For any $n \geq 1$, a partition {\boldmath$\eta$}  of $n$ with length $l$, the conditional Pitman sampling formula given ${\bf P}(\alpha,\theta)={\bf p}$ is 
    \[
   F_{\mbox{\boldmath$\eta$}}({\bf p})= C(n,\mbox{\boldmath$\eta$})\sum_{\mbox{distinct}\ i_1,\ldots,i_l }p_{i_1}^{\eta_1}\cdots p_{i_l}^{\eta_l}
   \]
   where 
   \[
   C(n,\mbox{\boldmath$\eta$})= \frac{n!}{\prod_{k=1}^l \eta_k!\prod_{j=1}^na_j(\mbox{\boldmath$\eta$})}.
   \]
   
  Assuming $\eta_i\geq 2$ for all $i$. Then $ F_{\mbox{\boldmath$\eta$}}({\bf p})$ is continuous on $\nabla_{\infty}$. By contraction principle, large deviation principles hold for the image laws of $PD(0,\theta)$ and $PD(\alpha,0)$ under $F_{\mbox{\boldmath$\eta$}}({\bf p})$ with respective speed $\theta$ and $-\log(1-\alpha)$.
  
  Integrating $F_{\mbox{\boldmath$\eta$}}({\bf p})$ with respect to $PD(\alpha, \theta)$ leads to the unconditional Pitman sampling formula. The large deviation speed is  shown in \cite{Feng07} to be 
   $\log \theta$ under $PD(0,\theta)$. In \cite{FengZhou15}, the large deviation  speed under $PD(\alpha,0)$ is shown to be  $-\log(1-\alpha)$. In other words, under $PD(0,\theta)$ the conditional and unconditional Pitman sampling formulae have different large deviation speeds due to averaging and finite sample size,  while under $PD(\alpha,0)$ the corresponding speeds are the same.  
    
   The large deviations  for $\Xi_{\alpha,0,\nu}$ provide more information on the microscopic transition structure at the critical  temperature for the REM. At the instant when the temperature starts to move below the critical value $T_c$, a portion of mass of the uniform measure $\nu$ may be lost and is replaced by an atomic portion with finite atoms. This represents the emerging of  finite number of energy valleys and the energy landscape of the system becomes a mixture of valleys and ``flat" regions. The emerging of energy valleys follow the order where the small number of energy valleys is more likely to occur than a large number of valleys.

      \end{document}